\newtheorem{theorem}{Theorem}[section]
\newtheorem{lemma}[theorem]{Lemma}
\theoremstyle{definition}
\theoremstyle{proposition}
\newtheorem{proposition}[theorem]{Proposition}
\theoremstyle{remark}
\newtheorem{remark}[theorem]{Remark}
\numberwithin{equation}{section}
\begin{document}

\title{Global Large Solution to Navier-Stokes and SQG Equations with Time Oscillation}

\author{Yiran Xu}
\address{Fudan University, 220 Handan Road, Yangpu, Shanghai, 200433, China.}
\email{yrxu20@fudan.edu.cn}

\author{Haina Li}
\address{School of Mathematics and Statistics, Beijing Institute of Technology, Beijing 100081, China.}
\email{lihaina2000@163.com}

\subjclass[2020]{Primary 35A01}



\keywords{global large solution, Navier-Stokes equations, SQG equations,  time oscillation}

\begin{abstract}
	In this paper, we  consider the model of 3D incompressible Navier-Stokes equations and 2D supercritical Surface Quasi-Geostrophic equations with time oscillation in the nonlinear term. We obtain that there exists global smooth solution of these two equations for any initial data $u_0\in H^2$.
\end{abstract}
\maketitle
\section{Introduction}

We investigate the global well-posedness of the three-dimensional incompressible Navier-Stokes equations with time oscillation in the nonlinear term. The system is described by:
\begin{align}\label{NS}
	\begin{cases}
		\partial_t u-\Delta u+b(Nt)u\cdot \nabla u+\nabla p=0,\\
		\nabla\cdot u=0,\\
		u(t,x)|_{t=0}=u_0(x).
	\end{cases}
\end{align}

Here, \(u\) denotes the velocity field and \(p\) represents the scalar pressure function of the fluid, ensuring the divergence-free condition of the velocity field. The function \(b:[0,+\infty) \rightarrow \mathbb{R}\) satisfies the condition:
\begin{equation}\label{oscillation}
	\|b\|_{L^\infty}+\left|\int_{t_1}^{t_2} b(\tau) d\tau\right|\leq M,\quad \forall t_2>t_1>0,
\end{equation}
where \(M > 0\). It is noteworthy that \(b(t)=\sin t\) satisfies this condition.

Initially, let us revisit the classical Navier-Stokes system for incompressible fluids in three dimension:
\begin{align}\label{NS classical}
	\begin{cases}
		\partial_t u-\Delta u+u\cdot \nabla u+\nabla p=0,\\
		\nabla\cdot u=0,\\
		u(t,x)|_{t=0}=u_0(x),
	\end{cases}
\end{align}
here we simplify the viscosity to 1. 

Notable advancements have been made concerning global existence results. The seminal work by\cite{Leray 1934} demonstrated the existence of at least one global weak solution satisfying energy estimates for any initial data with finite energy. However, the uniqueness of this solution in $\mathbb{R}^3$ remains indeterminate. Fujita-Kato  partially addressed the construction of a globally unique solution in  \cite{Fujita and Kato 1964,Kato 1984}, permitting a unique local-in-time solution with  sufficiently small initial data relative to viscosity in  \(\dot{H}^{\frac{1}{2}}(\mathbb{R}^3)\), \(L^3(\mathbb{R}^3)\) spaces. This result was further extended to $\dot{B}_{p,\infty}^{-1+\frac{3}{p}}$ spaces by Cannone, Meyer and Planchon in \cite{Cannone et al. 1994}. Later, the theorem established by Koch and Tataru in \cite{H. Koch and D. Tataru 2001} (2001) asserts the global well-posedness of the system when \(\partial  BMO\)  norm is small.

Until now, the inquiry into whether smooth initial data with finite energy can lead to a finite-time singularity in the three-dimensional (3D) incompressible Navier-Stokes equations is still an open focal point of the Millennium Prize problems \cite{Charles L. Fefffferman}.

A series of findings leverage the unique structural properties of the Navier-Stokes equations \eqref{NS classical}, where certain geometrical invariances in the initial data persist under the Navier-Stokes flow.  We refer for instance to \cite{15 Ladyzhenskaya}, \cite{Lei FanghuaLin}, \cite{18 Mahalov},
\cite{20 Ponce},  \cite{21 Ukhovskii}, where special symmetries (like helicoidal or axisymmetric without swirl) allow
to prove global well-posedness for any large data.

In addressing the challenges posed by high oscillatory initial data problems in \eqref{NS classical}, Chemin and Ping Zhang established the global existence of solutions \cite{Chemin and Zhang 2007}. Chemin and Gallagher conducted a series of investigations. In \cite{Chemin and Gallagher 2006}, they constructed a global solution with strong oscillation of initial data
\begin{align}\label{os data}
	u_0^N=(Nu_h(x_h)\cos(Nx_3),-div_h u_h(x_h)\sin(Nx_3)),
\end{align}
which was subsequently extended to $\mathbb{R}^3$ in \cite{Chemin and Gallagher 2009}. Later, the gradual variation of initial data in the vertical direction for the Navier-Stokes equations was explored by \cite{Chemin and Gallagher 2010}. More recently, Yanlin Liu and Ping Zhang \cite{Yanlin Liu} demonstrated the existence of a global strong solution with solenoidal initial data for the system \eqref{NS classical}.

In this paper, our central investigation pertains to the global well-posedness of the system delineated by equation \eqref{NS}, wherein time oscillations are present within the nonlinear term. Our methodological approach relies on the bootstrap argument, supplemented by the implementation of a modifier operator.
The principal theorem of our study is expressed as follows:
\begin{theorem}\label{thm1.1}
	For any large data $u_0\in H^2(\mathbb{R}^3)$, there exists $N_0=N_0(||u_0||_{H^2(\mathbb{R}^3)})\geq 1$ such that 
	for any $N\geq N_0$, the equation \eqref{NS} admits global smooth solution. 
\end{theorem}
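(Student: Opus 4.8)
The plan is to exploit the time oscillation $b(Nt)$ through a change of unknown that turns the highly oscillatory convection term into a small remainder. Introduce the primitive $B(t)=\int_0^t b(\tau)\,d\tau$, which by \eqref{oscillation} satisfies $\|B\|_{L^\infty}\le M$; then define the ``modifier'' $B_N(t)=\frac1N\int_0^{Nt}b(\tau)\,d\tau=\frac1N B(Nt)$, so that $B_N'(t)=b(Nt)$ and $\|B_N\|_{L^\infty}\le M/N$. Substituting $v=u$ and writing $u=w+$ (a correction built from $B_N$) is one route, but cleaner is to set $u(t)=e^{-B_N(t)\,\mathcal{A}}w(t)$ for a suitable operator $\mathcal{A}$, or more simply to absorb the oscillation by working with $w$ solving a perturbed system in which the convection term carries a prefactor $b(Nt)$ whose time-average is $O(1/N)$. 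Concretely, I would write $u\cdot\nabla u = \partial_t\bigl(B_N(t)\,u\cdot\nabla u\bigr)$-type identities after integrating the Duhamel formula by parts in time, so that each occurrence of the bad term gains a factor $B_N$ of size $M/N$ at the cost of extra derivatives on $u$, $\partial_t u$, which are controlled by the heat semigroup smoothing and the $H^2$ regularity.

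The core of the argument is a bootstrap. First establish, for fixed $N$, local-in-time existence and uniqueness of a smooth solution $u\in C([0,T];H^2)\cap L^2(0,T;H^3)$ by standard fixed-point/energy methods; this is the same theory as for \eqref{NS classical} since $b$ is bounded. Let $T^*$ be the maximal existence time and suppose for contradiction $T^*<\infty$. On $[0,T^*)$ derive the basic energy balance: multiplying \eqref{NS} by $u$ and using $\nabla\cdot u=0$, the convection term vanishes, giving $\|u(t)\|_{L^2}^2+2\int_0^t\|\nabla u\|_{L^2}^2\,ds=\|u_0\|_{L^2}^2$ exactly — so the $L^2$ and $\dot H^1$ controls are $N$-independent and global. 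The task is then to upgrade this to an $N$-independent $H^2$ bound. Take the equation, apply $\Delta$ (or work with $\nabla u$ in $\dot H^1$), test against $\Delta u$, and estimate the trilinear term $\int b(Nt)\,(u\cdot\nabla u)\cdot\Delta^2 u$ or equivalently $\int b(Nt)\,\nabla(u\cdot\nabla u):\nabla\Delta u$. Here is where one integrates by parts in time: write $b(Nt)=\frac{d}{dt}B_N(t)$, integrate the time-integral of this trilinear term by parts, landing time derivatives on $u$ via the equation $\partial_t u=\Delta u-b(Nt)\,u\cdot\nabla u-\nabla p$. Every resulting term either carries an explicit $B_N\sim M/N$ factor, or is a higher-order term that, after the $\dot H^1$ energy is spent, can be closed by Gagliardo--Nirenberg and Young's inequality provided the $H^2$ norm stays in a bootstrap ball of radius $R=R(\|u_0\|_{H^2})$.

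The bootstrap is set up as follows: fix $R$ large (depending only on $\|u_0\|_{H^2}$ and $M$), let $T_R=\sup\{t<T^*:\sup_{[0,t]}\|u\|_{H^2}\le R\}$, and show that on $[0,T_R]$ one actually gets $\sup\|u\|_{H^2}\le R/2$ once $N\ge N_0(R,M)=N_0(\|u_0\|_{H^2})$, because the genuinely dangerous contributions all come multiplied by $1/N$. This forces $T_R=T^*$, and combined with the $N$-independent a priori $H^2$ bound contradicts $T^*<\infty$ by the blow-up criterion for the local theory; hence $T^*=+\infty$ and the solution is global and smooth (propagation of higher regularity being automatic once $H^2$ is controlled, via bootstrapping $H^s$ estimates).

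The main obstacle is the integration-by-parts-in-time step: after moving $\partial_t$ onto $u$ one must substitute $\partial_t u$ from the equation, which reintroduces the nonlinearity $b(Nt)\,u\cdot\nabla u$ without an oscillation gain, producing a quartic term $\sim\int B_N\,(u\cdot\nabla u)\cdot(\text{derivatives of }u\cdot\nabla u)$; one must check this quartic term is controlled in $H^2$ — this needs $u\in L^4_t\dot H^2_x$-type bounds obtained by interpolating the $L^\infty_t L^2$ and $L^2_t\dot H^1$ energies with the bootstrap $H^2$ control, and crucially the $1/N$ (equivalently $\|B_N\|_{L^\infty}$) factor must dominate the polynomial-in-$R$ growth. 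Managing the pressure term $\nabla p$ in $\partial_t u$ is routine (test functions are divergence-free or one uses the Leray projector), and the boundary terms at $t=0$ from the time integration by parts are handled by the initial $H^2$ datum; the one subtlety to watch is that $B_N(0)=0$, which conveniently kills the $t=0$ boundary term entirely.
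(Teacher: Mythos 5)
Your overall skeleton is the same as the paper's: an $\dot H^2$ energy estimate obtained by testing with $\Delta^2 u$, an integration by parts in time against the primitive $\frac1N\int_0^{Nt}b(\sigma)\,d\sigma$ (which is $O(M/N)$ by \eqref{oscillation}), substitution of $\partial_t u=\Delta u-\mathbb{P}\bigl(b(Nt)\,u\cdot\nabla u\bigr)$, and a continuity/bootstrap argument on the quantity $\|u\|_{L^\infty_T\dot H^2}^2+\|u\|_{L^2_T\dot H^3}^2$ to conclude globality for $N\ge N_0(\|u_0\|_{H^2})$. The $N$-independent energy identity and the observation that the $t=0$ boundary term vanishes are also as in the paper.

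However, there is a genuine gap at the step you yourself flag as the "main obstacle," and your proposed fixes do not close it. After the time integration by parts, the time derivative lands on the highest-order factor of the trilinear term: schematically you must estimate
\begin{equation*}
\frac{M}{N}\int_0^T\int_{\mathbb{R}^3}\bigl|\nabla u\bigr|\,\bigl|\nabla^2 u\bigr|\,\bigl|\nabla^2\partial_t u\bigr|\,dx\,dt
\;\sim\;\frac{M}{N}\int_0^T\int_{\mathbb{R}^3}\bigl|\nabla u\bigr|\,\bigl|\nabla^2 u\bigr|\,\bigl|\nabla^4 u\bigr|\,dx\,dt+\cdots,
\end{equation*}
and the factor $\nabla^4 u$ (and, in the boundary term at $t=T$, quantities like $\|\Delta u(T)\|_{L^\infty}$) is simply not controlled by the bootstrap norm $L^\infty_T\dot H^2\cap L^2_T\dot H^3$. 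Invoking "heat semigroup smoothing" does not produce a quantitative bound here compatible with keeping the $1/N$ prefactor, and spatial integrations by parts only redistribute the derivatives into expressions like $\int_0^T\|\nabla u\|_{L^\infty}\|u\|_{\dot H^3}^2\,dt$, which by Gagliardo--Nirenberg scales like $\int_0^T\|u\|_{\dot H^3}^{17/6}\,dt$ and again exceeds what $L^2_T\dot H^3$ gives. The paper's resolution is the mollifier $u_\epsilon=u\star\rho_\epsilon$: the highest-order factors are replaced by $u_\epsilon$, so that the over-differentiated terms cost only $\epsilon^{-2}$ (Lemma \ref{epsilon}) while the commutator $\mathcal{I}$ between the mollified and unmollified expressions costs $\epsilon$ (Lemma \ref{modifier}); optimizing in $\epsilon$ then yields a net gain of $N^{-1/3}$, not $N^{-1}$. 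Your proposal implicitly assumes the full $1/N$ gain survives the substitution of $\partial_t u$; without the regularization (or some substitute for it, e.g.\ a frequency truncation playing the same role), the derivative count does not close and the bootstrap inequality cannot be established. You would need to add this regularization layer, and correspondingly weaken the claimed gain from $1/N$ to a fractional power of $1/N$, for the argument to be complete.
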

\begin{remark}
	Our method in proof Theorem \ref{thm1.1} is useful to give a shorter proof of global existence of Navier-Stokes equations \eqref{NS classical} with highly oscillatory data like \eqref{os data}.
\end{remark}

In this manuscript, we also utilize the two-dimensional supercritical Surface Quasi-Geostrophic (SQG) equations with time oscillations to showcase the applicability of our proof methodology. For a comprehensive examination of the global well-posedness of these equations, we direct the reader to the works of \cite{Hongjie Dong 2024},\cite{Hung},\cite{10Constantin},\cite{23  Kiselev},\cite{Nguyen},\cite{27 Resnick}. 

We now precisely delineate the 2D supercritical Surface Quasi-Geostrophic equations under scrutiny in this study:
\begin{align}\label{SQG}
	\begin{cases}
		\partial_t \theta+b(Nt)\nabla^\perp(-\Delta)^{-\frac{1}{2}} \theta\cdot\nabla \theta+(-\Delta)^{\frac{\alpha}{2}} \theta =0,\\
		\theta(t,x)|_{t=0}=\theta_0(x),
	\end{cases}
\end{align}
where $\nabla^\perp=(-\partial_2,\partial_1)$, $\alpha\in(0,1)$ and $\theta=\theta(t,x)$ is a scalar function of $x\in\mathbb{R}^2$ and $t>0$, representing the potential temperature. Here, the function $b$ shares the same conditions as the function utilized in the 3D incompressible Navier-Stokes equations. Leveraging a series of Lemmas, we can readily derive a result analogous to that of the Navier-Stokes equations. The statement of the theorem is presented below:
\begin{theorem}\label{thm 1.2}
	For any large data $\theta_0 \in H^2(\mathbb{R}^2)$, there exists $N_0=N_0(\|\theta_0\|_{H^2(\mathbb{R}^2)}) \geq 1$ such that for any $N\geq N_0$, the equation \eqref{SQG} admits global smooth solution.
\end{theorem}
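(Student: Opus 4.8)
The plan is to adapt the scheme behind Theorem~\ref{thm1.1}: a bootstrap argument in which the time oscillation of $b(Nt)$ is exploited through a time‑dependent modifier of the unknown. Since $\alpha\in(0,1)$, the space $H^2(\mathbb R^2)$ is subcritical for \eqref{SQG}, so classical arguments give a unique maximal smooth solution $\theta$ on $[0,T^\ast)$ with the continuation criterion that $T^\ast<\infty$ forces $\|\theta(t)\|_{H^2}\to\infty$; because $e^{-t(-\Delta)^{\alpha/2}}$ is smoothing, a \emph{uniform‑in‑time} bound on $\|\theta\|_{H^2}$ then yields a global $C^\infty$ solution (alternatively one reruns the estimates below at each order $H^k$). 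So it suffices to produce such a bound for $N$ large. Two facts are free of the oscillation: because the drift $v:=\nabla^\perp(-\Delta)^{-1/2}\theta$ is divergence free, testing \eqref{SQG} against $\theta$ gives $\|\theta(t)\|_{L^2}\le\|\theta_0\|_{L^2}$ and the globally finite bound $\int_0^\infty\|(-\Delta)^{\alpha/4}\theta(s)\|_{L^2}^2\,ds\le\tfrac12\|\theta_0\|_{L^2}^2$.

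The modifier: set $G(t):=\int_0^t b(N\tau)\,d\tau$, so $G'(t)=b(Nt)$ and $|G(t)|\le M/N$ for all $t$ by \eqref{oscillation}. Since the ``fast'' part of \eqref{SQG} is exactly the transport term, define
\[
\Theta:=\theta+G(t)\,(v\cdot\nabla\theta).
\]
A direct computation using \eqref{SQG} shows the term $b(Nt)\,v\cdot\nabla\theta$ cancels, leaving
\[
\partial_t\Theta+(-\Delta)^{\alpha/2}\Theta=G(t)\Big[(-\Delta)^{\alpha/2}(v\cdot\nabla\theta)+\partial_t(v\cdot\nabla\theta)\Big],\qquad \Theta|_{t=0}=\theta_0,
\]
where $\partial_t(v\cdot\nabla\theta)$ is expanded by inserting $\partial_t\theta=-b(Nt)v\cdot\nabla\theta-(-\Delta)^{\alpha/2}\theta$ (and the corresponding identity for $\partial_t v$). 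Thus $\Theta$ solves a fractional heat equation whose entire forcing carries the factor $G(t)=O(1/N)$ in front of (i) one term with an extra $(-\Delta)^{\alpha/2}$, absorbed by the smoothing of the semigroup, (ii) quartic‑in‑$\theta$ terms with an extra factor $b(Nt)$ but no new derivatives, and (iii) cubic terms with one extra $(-\Delta)^{\alpha/2}$. Equivalently one may keep $\theta$ and integrate the oscillation by parts in time inside the $\dot H^2$ energy identity, producing a modified energy $\tfrac12\|\theta\|_{\dot H^2}^2+G(t)\langle[(-\Delta),v\cdot\nabla]\theta,(-\Delta)\theta\rangle$ with the same remainder structure; I will argue with whichever form is cleaner for a given term.

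The a~priori estimate and bootstrap: fix $A:=2\|\theta_0\|_{H^2}$ and let $[0,T_1)\subseteq[0,T^\ast)$ be the maximal interval on which $\|\theta(t)\|_{H^2}\le A$. Running the $H^2$ energy estimate on $\Theta$ and passing back to $\theta$ (the correction $\theta-\Theta=G(t)\,v\cdot\nabla\theta$ being of size $O(1/N)$ in the norms implied by the bootstrap), one obtains on $[0,T_1)$
\[
\tfrac12\|\theta(t)\|_{\dot H^2}^2+\tfrac12\int_0^t\|\theta(s)\|_{\dot H^{2+\alpha/2}}^2\,ds\;\le\;\tfrac12\|\theta_0\|_{\dot H^2}^2+\frac{C}{N}\,\mathcal R(t),
\]
where $\mathcal R(t)$ gathers all contributions of the $O(1/N)$ forcing. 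The decisive step is that each term of $\mathcal R$ is estimated by routing derivatives so that its two highest‑order factors are absorbed by the dissipation $\|\theta\|_{\dot H^{2+\alpha/2}}$ while the remaining factors are controlled by $\|\theta\|_{H^2}\le A$ (here $H^2(\mathbb R^2)\hookrightarrow L^\infty$ and $H^2\hookrightarrow W^{1,p}$, $p<\infty$, are used generously), the genuinely low‑order leftovers being handled by the globally finite $\int_0^\infty\|(-\Delta)^{\alpha/4}\theta\|_{L^2}^2\le\tfrac12\|\theta_0\|_{L^2}^2$. This gives $\mathcal R(t)\le C\,P(A)\big(\int_0^t\|\theta\|_{\dot H^{2+\alpha/2}}^2\,ds+\|\theta(t)\|_{\dot H^2}^2+\|\theta_0\|_{L^2}^2\big)$ for a fixed polynomial $P$, with \emph{no} term growing in $t$. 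Choosing $N\ge N_0(\|\theta_0\|_{H^2})$ so that $\tfrac{CP(A)}{N}\le\tfrac14$, the first two terms are re‑absorbed on the left and $\sup_{[0,T_1)}\|\theta\|_{\dot H^2}^2\le\|\theta_0\|_{\dot H^2}^2+o_N(1)<A^2$, so $\sup_{[0,T_1)}\|\theta\|_{H^2}<A$ strictly. By maximality $T_1=T^\ast$; the bound being uniform, the continuation criterion forces $T^\ast=\infty$, and smoothing gives global smoothness. The auxiliary facts used — local theory and continuation criterion, the divergence‑free cancellation, the product and commutator estimates on $\mathbb R^2$ — are precisely the ``series of Lemmas'' referred to before the statement.

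I expect the main obstacle to be the uniform‑in‑time control of the $O(1/N)$ forcing under supercritical dissipation. Because $\alpha<1$, the nonlinearity $v\cdot\nabla\theta$ costs one derivative while $(-\Delta)^{\alpha/2}$ returns only $\alpha$, so the top‑order estimate cannot be closed for large data on its own — this is precisely why global regularity of \eqref{SQG} with $b\equiv1$ is open. The oscillation supplies the missing smallness, but only after the modifier re‑inserts $\partial_t\theta\sim(-\Delta)^{\alpha/2}\theta+(\text{quadratic})$ into an already top‑order quantity; the crux is to see that the resulting higher‑order and higher‑degree remainders are nevertheless majorized by the two time‑integrable quantities available ($\int_0^\infty\|\theta\|_{\dot H^{2+\alpha/2}}^2$ and $\int_0^\infty\|(-\Delta)^{\alpha/4}\theta\|_{L^2}^2$) times a power of the frozen $H^2$ norm and the factor $M/N$ — rather than by a naive $\int_0^t P(\|\theta\|_{H^2})\,ds$, which would grow linearly and destroy the uniformity. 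The borderline two‑dimensional embedding $H^2(\mathbb R^2)\not\hookrightarrow W^{1,\infty}$ is a genuine but lower‑order nuisance, handled by a logarithmic inequality or by the arbitrarily small regularity margin afforded by the smoothing of the fractional dissipation.
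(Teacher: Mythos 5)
Your overall strategy --- exploit the boundedness of $G(t)=\int_0^t b(N\tau)\,d\tau=O(M/N)$ by a normal-form corrector / time integration by parts, then close a bootstrap on $\|\theta\|_{L^\infty_T\dot H^2}^2+\|\theta\|_{L^2_T\dot H^{2+\alpha/2}}^2$ using the energy inequality --- is exactly the paper's philosophy. But there is a genuine gap at the point you yourself flag as ``the crux,'' and the resolutions you sketch (semigroup smoothing, a logarithmic inequality) do not supply what is needed. After the time integration by parts, the remainder contains terms in which $\partial_t\theta=-(-\Delta)^{\alpha/2}\theta-b(Nt)\,v\cdot\nabla\theta$ is inserted into a trilinear expression that is already at top order; schematically one must control
\begin{equation*}
\frac{1}{N}\int_0^T\Big|\int_{\mathbb{R}^2}\nabla^{\perp}(-\Delta)^{\frac{1+\alpha}{2}}\theta\cdot\nabla\theta\,\Delta\theta\,dx\Big|\,dt
\;\lesssim\;\frac{1}{N}\int_0^T\|\theta\|_{\dot H^{2+\alpha}}\,\|\nabla\theta\|_{L^\infty}\,\|\theta\|_{\dot H^{2}}\,dt ,
\end{equation*}
together with worse terms (order up to $4$) coming from $\partial_t$ falling on the mollified/high factors. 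The norm $\dot H^{2+\alpha}$ (let alone $\dot H^{4}$) lies strictly \emph{above} the dissipation scale $\dot H^{2+\alpha/2}$ and cannot be interpolated between $\dot H^{2}$ and $\dot H^{2+\alpha/2}$; maximal regularity for $e^{-t(-\Delta)^{\alpha/2}}$ only returns $\alpha$ derivatives in $L^2_t$, which is likewise insufficient since $v\cdot\nabla\theta$ measured in $\dot H^{2+\alpha/2}$ already requires $3+\alpha/2$ derivatives of $\theta$. So the $O(1/N)$ remainder, as you have set it up, is not majorized by $P(A)\bigl(\int_0^t\|\theta\|_{\dot H^{2+\alpha/2}}^2+\dots\bigr)$, and the bootstrap does not close.

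The missing device in the paper is a \emph{spatial} mollification $\theta_\epsilon=\theta\star\rho_\epsilon$ run in tandem with the time oscillation. The nonlinear term is split into a commutator part $\mathbb{I}$ (full minus mollified), which gains $\epsilon^{\alpha/2}$ via $\|f-f_\epsilon\|_{L^2}\le\epsilon^{s}\|f\|_{\dot H^{s}}$ and is estimated \emph{without} using the oscillation, and a mollified part $\mathbb{J}$, on which the time integration by parts is performed; there every over-differentiated factor sits on $\theta_\epsilon$, so $\|\theta_\epsilon\|_{\dot H^{m}}\le\epsilon^{-m_2}\|\theta\|_{\dot H^{m-m_2}}$ converts the uncontrollable derivatives into a factor $\epsilon^{-2}$ times norms reachable by interpolation from $L^2$, $\dot H^{\alpha/2}$, $\dot H^{2}$, $\dot H^{2+\alpha/2}$. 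Optimizing $\epsilon^{\alpha/2}\sim (N\epsilon^{2})^{-1}$ yields a net smallness $N^{-\alpha/(4+\alpha)}$ --- not the clean $C/N$ you assert, which is itself a signal that the derivative bookkeeping has not been confronted. To repair your proof you should insert this two-parameter regularization (or an equivalent frequency truncation at scale $\epsilon^{-1}$ chosen as a function of $N$) before integrating the oscillation by parts; the rest of your argument (energy inequality, a priori $L^2$ and $\dot H^{\alpha/2}$ bounds, continuity/bootstrap, parabolic smoothing to upgrade to $C^\infty$) then goes through as in the paper.
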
	
\section{Preliminaries}
Now we start to prove some inequalities which will be used several times in the proof of the main propositions. Lemma \ref{interpolation} will be used in Section 3, and Lemma \ref{SQGinterpolation} will be used in Section 4.
\begin{lemma}\label{interpolation}
	Let $u:\mathbb{R}^3 \rightarrow \mathbb{R}^3$ be a vector function, it holds that
	\begin{align}
		&\int_{0}^{T} \|\nabla u\|_{L^\infty(\mathbb{R}^3)} \| u\|_{\dot{H}^2(\mathbb{R}^3)} \|u\|_{\dot{H}^3(\mathbb{R}^3)} dt
		\lesssim \| u\|_{L_T^\infty L^2(\mathbb{R}^3)}^{\frac{1}{4}} \left( \|u\|_{L_T^\infty \dot{H}^2(\mathbb{R}^3)}^{\frac{11}{4}} + \|u\|_{L_T^2 \dot{H}^3(\mathbb{R}^3)}^{\frac{11}{4}}\right),\label{11/4}\\
		&\int_{0}^{T} \|\nabla u\|_{L^\infty(\mathbb{R}^3)} \| u\|_{\dot{H}^1(\mathbb{R}^3)} \|u\|_{\dot{H}^3(\mathbb{R}^3)}   dt \lesssim \|u\|_{L^\infty_T L^2(\mathbb{R}^3)}^{\frac{3}{4}} 
		\left(
		\| u\|_{L^\infty_T\dot{H}^2(\mathbb{R}^3)}^{\frac{9}{4}} + \| u\|_{L_T^2 \dot{H}^3(\mathbb{R}^3)}^{\frac{9}{4}}
		\right),\label{9/4}\\
		&\int_{0}^{T} \|\nabla u\|_{L^\infty(\mathbb{R}^3)}  \| u\|_{\dot{H}^1(\mathbb{R}^3)} \|u \cdot \nabla u\|_{\dot{H}^1(\mathbb{R}^3)}  dt \lesssim \|u\|_{L_T^\infty L^2(\mathbb{R}^3)}^\frac{3}{2} 
		\left(
		\|u\|_{L_T^\infty \dot{H}^2(\mathbb{R}^3)}^{\frac{5}{2}}
		+ \|u\|_{L_T^2 \dot{H}^3(\mathbb{R}^3)}^{\frac{5}{2}}
		\right).\label{5/2}
	\end{align}
\end{lemma}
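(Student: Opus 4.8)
\emph{Strategy.} The plan is to reduce all three inequalities to one common normal form: after interpolating in the space variable, the time-integrand in each case is dominated by $\|u\|_{L^2}^{a}\|u\|_{\dot H^2}^{b}\|u\|_{\dot H^3}^{2}$ with $a,b\geq0$, where $a$ is exactly the exponent of $\|u\|_{L_T^\infty L^2}$ that appears on the right-hand side; integrating in $t$ then yields $\|u\|_{L_T^\infty L^2}^{a}\|u\|_{L_T^\infty\dot H^2}^{b}\|u\|_{L_T^2\dot H^3}^{2}$, and Young's inequality converts this into $\|u\|_{L_T^\infty L^2}^{a}\big(\|u\|_{L_T^\infty\dot H^2}^{b+2}+\|u\|_{L_T^2\dot H^3}^{b+2}\big)$, with $b+2$ equal to $\tfrac{11}{4},\tfrac{9}{4},\tfrac{5}{2}$ in the three cases. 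The inputs are three pointwise inequalities on $\mathbb{R}^3$: (i) $\|\nabla u\|_{L^\infty}\lesssim\|u\|_{\dot H^2}^{1/2}\|u\|_{\dot H^3}^{1/2}$, which is \emph{not} a Sobolev embedding (indeed $\dot H^{5/2}(\mathbb{R}^3)\not\hookrightarrow\dot W^{1,\infty}(\mathbb{R}^3)$) but follows by a Littlewood--Paley splitting of $\sum_j 2^{5j/2}\|\Delta_j u\|_{L^2}$ at the frequency $2^{J}\sim\|u\|_{\dot H^3}/\|u\|_{\dot H^2}$; (ii) the elementary log-convexity bounds $\|u\|_{\dot H^1}\lesssim\|u\|_{L^2}^{2/3}\|u\|_{\dot H^3}^{1/3}$ and $\|u\|_{\dot H^2}^{\delta}\lesssim\|u\|_{L^2}^{\delta/3}\|u\|_{\dot H^3}^{2\delta/3}$ for all $\delta\geq0$; and (iii), needed only for \eqref{5/2}, the product estimate $\|u\cdot\nabla u\|_{\dot H^1}\lesssim\|\nabla u\|_{L^\infty}\|u\|_{\dot H^1}+\|u\|_{\dot H^1}\|u\|_{\dot H^2}^{1/2}\|u\|_{\dot H^3}^{1/2}$, obtained from the Leibniz rule $\nabla(u\cdot\nabla u)\sim\nabla u\otimes\nabla u+u\otimes\nabla^2u$, Hölder's inequality, the Sobolev embeddings $\dot H^1\hookrightarrow L^6$ and $\dot H^{5/2}\hookrightarrow\dot W^{2,3}$, and (i).

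\emph{Reduction.} Substituting (i)--(ii) into the integrand of \eqref{11/4} gives a pointwise bound $\lesssim\|u\|_{\dot H^2}^{3/2}\|u\|_{\dot H^3}^{3/2}$; into that of \eqref{9/4}, $\lesssim\|u\|_{L^2}^{2/3}\|u\|_{\dot H^2}^{1/2}\|u\|_{\dot H^3}^{11/6}$; and for \eqref{5/2}, after using (iii) to split the integrand into $\|\nabla u\|_{L^\infty}^2\|u\|_{\dot H^1}^2$ plus $\|\nabla u\|_{L^\infty}\|u\|_{\dot H^1}^2\|u\|_{\dot H^2}^{1/2}\|u\|_{\dot H^3}^{1/2}$, both pieces are $\lesssim\|u\|_{L^2}^{4/3}\|u\|_{\dot H^2}\|u\|_{\dot H^3}^{5/3}$. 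Writing such a first bound as $\|u\|_{L^2}^{a_0}\|u\|_{\dot H^2}^{b_0}\|u\|_{\dot H^3}^{c_0}$, one has $c_0<2$ in every case, so I would then convert a power $\delta:=\tfrac32(2-c_0)$ of $\|u\|_{\dot H^2}$ by (ii), which raises the $\dot H^3$-exponent to exactly $2$; a direct check shows $\delta\leq b_0$ (so the surviving power $b:=b_0-\delta$ is $\geq0$) and $a:=a_0+\delta/3$ equals $\tfrac14,\tfrac34,\tfrac32$ while $b=\tfrac34,\tfrac14,\tfrac12$, respectively, which is exactly the normal form of the Strategy.

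\emph{Closing.} With $a,b\geq0$ we pull $\|u\|_{L_T^\infty L^2}^{a}\|u\|_{L_T^\infty\dot H^2}^{b}$ out of $\int_0^T(\cdot)\,dt$, leaving $\int_0^T\|u\|_{\dot H^3}^2\,dt=\|u\|_{L_T^2\dot H^3}^2$. Young's inequality with conjugate exponents $\big(\tfrac{b+2}{b},\tfrac{b+2}{2}\big)$ gives $\|u\|_{L_T^\infty\dot H^2}^{b}\|u\|_{L_T^2\dot H^3}^{2}\lesssim\|u\|_{L_T^\infty\dot H^2}^{b+2}+\|u\|_{L_T^2\dot H^3}^{b+2}$, and since $b+2$ is $\tfrac{11}{4},\tfrac{9}{4},\tfrac{5}{2}$ this is precisely the right-hand side of \eqref{11/4}, \eqref{9/4}, \eqref{5/2}.

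\emph{Main obstacle.} The genuinely delicate step is the exponent bookkeeping for \eqref{5/2}: one must pick a product estimate for $\|u\cdot\nabla u\|_{\dot H^1}$ whose worst term, after applying (i)--(ii), both has $\dot H^3$-exponent $\leq2$ \emph{and} still carries enough raw $\dot H^2$-weight $b_0$ to absorb the trade $\delta$; the estimate (iii) is calibrated so that $(b_0,c_0)=(1,\tfrac53)$ with $\delta=\tfrac12<1=b_0$. A lesser point is that (i) must be established as an interpolation inequality via Littlewood--Paley, since the corresponding endpoint Sobolev embedding fails; everything else is routine Gagliardo--Nirenberg and Young manipulation.
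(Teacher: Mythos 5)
Your proposal is correct and takes essentially the same route as the paper: pointwise Gagliardo--Nirenberg interpolation reducing each integrand to the normal form $\|u\|_{L^2}^{a}\|u\|_{\dot H^2}^{b}\|u\|_{\dot H^3}^{2}$ with $(a,b)=(\tfrac14,\tfrac34),(\tfrac34,\tfrac14),(\tfrac32,\tfrac12)$, then H\"older in time and Young's inequality. The only immaterial differences are your choices of $\|\nabla u\|_{L^\infty}\lesssim\|u\|_{\dot H^2}^{1/2}\|u\|_{\dot H^3}^{1/2}$ and of the product estimate for $\|u\cdot\nabla u\|_{\dot H^1}$, where the paper instead uses $\|\nabla u\|_{L^\infty}\lesssim\|u\|_{L^2}^{1/6}\|u\|_{\dot H^3}^{5/6}$ and $\|u\cdot\nabla u\|_{\dot H^1}\lesssim\|u\|_{L^\infty}\|u\|_{\dot H^2}+\|u\|_{\dot H^1}\|\nabla u\|_{L^\infty}$; both choices land on the same intermediate bounds.
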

\begin{proof}
	Before proving the above estimates, we first give the useful inequalities by applying Gagliardo-Nirenberg interpolation.
	\begin{align}
		&\|u\|_{L^\infty(\mathbb{R}^3)} \lesssim \|u\|_{L^2(\mathbb{R}^3)}^{\frac{1}{2}}\|u\|_{\dot{H}^3(\mathbb{R}^3)}^{\frac{1}{2}},\label{u}\\
		&\|\nabla u\|_{L^\infty(\mathbb{R}^3)} \lesssim \|u\|_{L^2(\mathbb{R}^3)}^{\frac{1}{6}} \| u\|_{\dot{H}^3(\mathbb{R}^3)}^{\frac{5}{6}},\label{nabla u}\\
		&\| u\|_{\dot{H}^1(\mathbb{R}^3)} \lesssim \|u\|_{L^2(\mathbb{R}^3)}^{\frac{2}{3}} \| u\|_{\dot{H}^3(\mathbb{R}^3)}^{\frac{1}{3}},\label{nabla u1}\\
		&\| u\|_{\dot{H}^1(\mathbb{R}^3)} \lesssim \|u\|_{L^2(\mathbb{R}^3)}^{\frac{1}{2}}\| u\|_{\dot{H}^2(\mathbb{R}^3)}^{\frac{1}{2}}\label{nabla u2},\\
		&\| u\|_{\dot{H}^2(\mathbb{R}^3)} \lesssim  \|u\|_{L^2(\mathbb{R}^3)}^{\frac{1}{3}} \|u\|_{\dot{H}^3(\mathbb{R}^3)}^{\frac{2}{3}}.\label{nabla^2 u}
	\end{align}
	Combining with \eqref{nabla u}, \eqref{nabla^2 u} and Young's inequality, the following inequality holds true
	\begin{align*}
		\int_{0}^{T} \|\nabla u\|_{L^\infty(\mathbb{R}^3)} \| u\|_{\dot{H}^2(\mathbb{R}^3)} \| u\|_{\dot{H}^3(\mathbb{R}^3)} dt
		&\lesssim \| u\|_{L_T^\infty L^2(\mathbb{R}^3)}^{\frac{1}{4}} \|u\|_{L_T^\infty \dot{H}^2(\mathbb{R}^3)}^{\frac{3}{4}} \| u\|_{L_T^2 \dot{H}^3(\mathbb{R}^3)}^2\\
		&\lesssim \| u\|_{L_T^\infty L^2(\mathbb{R}^3)}^{\frac{1}{4}} \left( \|u\|_{L_T^\infty \dot{H}^2(\mathbb{R}^3)}^{\frac{11}{4}} + \|u\|_{L_T^2 \dot{H}^3(\mathbb{R}^3)}^{\frac{11}{4}}\right),
	\end{align*}
	which completes the proof of \eqref{11/4}.\\
	According to \eqref{nabla u}-\eqref{nabla u2} and Young's inequality, we obtain
	\begin{align*}
		\int_{0}^{T}  \|\nabla u\|_{L^\infty(\mathbb{R}^3)}  \| u\|_{\dot{H}^1(\mathbb{R}^3)} \| u\|_{\dot{H}^3(\mathbb{R}^3)} dt
		&\lesssim \|u\|_{L^\infty_T L^2(\mathbb{R}^3)}^{\frac{3}{4}} \|u\|_{L^\infty_T \dot{H}^2(\mathbb{R}^3)}^{\frac{1}{4}} \| u\|_{L_T^2 \dot{H}^3(\mathbb{R}^3)}^2\\
		&\lesssim \|u\|_{L^\infty_T L^2(\mathbb{R}^3)}^{\frac{3}{4}}
		\left(
		\| u\|_{L^\infty_T\dot{H}^2(\mathbb{R}^3)}^{\frac{9}{4}} + \| u\|_{L_T^2 \dot{H}^3(\mathbb{R}^3)}^{\frac{9}{4}}
		\right),
	\end{align*}
	hence we obtain the estimate in \eqref{9/4}.\\
	On account of \eqref{u}-\eqref{nabla u1} and \eqref{nabla^2 u}, we have the following results with the application of paraproduct theory\cite{99Bahouri} and Young's inequality,
	\begin{align*}
		&\int_{0}^{T} \|\nabla u\|_{L^\infty(\mathbb{R}^3)} \| u\|_{\dot{H}^1(\mathbb{R}^3)}  \|u \cdot \nabla u\|_{\dot{H}^1(\mathbb{R}^3)}  dt\\
		&\quad\leq \int_{0}^{T}  \|\nabla u\|_{L^\infty(\mathbb{R}^3)} \| u\|_{\dot{H}^1(\mathbb{R}^3)}
		\left(\|u\|_{L^\infty(\mathbb{R}^3)} \|u\|_{\dot{H}^2(\mathbb{R}^3)} + \|u\|_{\dot{H}^1} \|\nabla u\|_{L^\infty(\mathbb{R}^3)}\right) dt\\
		&\quad\lesssim \int_{0}^{T} \left(\|u\|_{L^2(\mathbb{R}^3)}^{\frac{3}{2}} \|u\|_{\dot{H}^2(\mathbb{R}^3)}^\frac{1}{2} \|u\|_{\dot{H}^3(\mathbb{R}^3)}^2  
		+ \|u\|_{\dot{H}^1(\mathbb{R}^3)}^2 \|\nabla u\|_{L^\infty(\mathbb{R}^3)}^2\right) dt\\
		&\quad\lesssim \|u\|_{L_T^\infty L^2(\mathbb{R}^3)}^\frac{3}{2} \|u\|_{L_T^\infty \dot{H}^2(\mathbb{R}^3)}^{\frac{1}{2}} \|u\|_{L_T^2 \dot{H}^3(\mathbb{R}^3)}^2\\
		&\quad\lesssim \|u\|_{L_T^\infty L^2(\mathbb{R}^3)}^\frac{3}{2} 
		\left(
		\|u\|_{L_T^\infty \dot{H}^2(\mathbb{R}^3)}^{\frac{5}{2}}
		+ \|u\|_{L_T^2 \dot{H}^3(\mathbb{R}^3)}^{\frac{5}{2}}
		\right),
	\end{align*}
	therefore we complete the proof of \eqref{5/2}.
\end{proof}
\begin{lemma}\label{SQGinterpolation}
	Let $\theta:\mathbb{R}^2 \rightarrow \mathbb{R}$  be scalar function, $\alpha \in (0,1)$, it holds that
	\small{\begin{align}
			&\int_{0}^{T} \|\theta\|_{\dot{H}^{2+\frac{\alpha}{2}}(\mathbb{R}^2)}
			\|\nabla \theta\|_{L^\infty(\mathbb{R}^2)} \|\theta\|_{\dot{H}^2(\mathbb{R}^2)} dt
			\lesssim
			\|\theta\|_{L_T^\infty \dot{H}^{\frac{\alpha}{2}} (\mathbb{R}^2)}^{\frac{\alpha}{4}}
			\left(
			\|\theta\|_{L_T^2 \dot{H}^{2+\frac{\alpha}{2}}(\mathbb{R}^2)}^{\frac{12-\alpha}{4}} + \|\theta\|_{L_T^2 \dot{H}^2(\mathbb{R}^2)}^{\frac{12-\alpha}{4}}
			\right),\label{estimate 1}\\
			&\int_{0}^T 
			\|\theta\|_{\dot{H}^{1+\alpha}(\mathbb{R}^2)} 
			\|\nabla \theta\|_{L^\infty (\mathbb{R}^2)}
			\|\theta\|_{\dot{H}^1(\mathbb{R}^2)} dt
			\lesssim \|\theta\|_{L_T^\infty \dot{H}^{\frac{\alpha}{2}}(\mathbb{R}^2)}^{\frac{1}{2}}
			\|\theta\|_{L_T^\infty L^2(\mathbb{R}^2)}^{\frac{1}{2}}
			\left(
			\|\theta\|_{L_T^2 \dot{H}^{2+\frac{\alpha}{2}}(\mathbb{R}^2)}^2
			+ \|\theta\|_{L_T^2 \dot{H}^2(\mathbb{R}^2)}^2
			\right),\label{estimate 2}\\
			&\int_{0}^T
			\|\nabla \theta\|_{L^\infty(\mathbb{R}^2)}^2
			\| \theta\|_{\dot{H}^1(\mathbb{R}^2)}^2 dt
			\lesssim
			\|\theta\|_{L_T^\infty L^2(\mathbb{R}^2)}
			\|\theta\|_{L_T^\infty \dot{H}^{\frac{\alpha}{2}}(\mathbb{R}^2)}^{\frac{\alpha}{2}}
			\left(
			\|\theta\|_{L_T^2\dot{H}^{2+\frac{\alpha}{2}}(\mathbb{R}^2)}^{\frac{6-\alpha}{2}}
			+ \|\theta\|_{L_T^2 \dot{H}^2(\mathbb{R}^2)}^{\frac{6-\alpha}{2}}
			\right),\label{estimate 3}\\
			& \int_{0}^T
			\|\nabla \theta\|_{L^\infty(\mathbb{R}^2)}^2
			\|\theta\|_{L^2 (\mathbb{R}^2)}
			\|\theta\|_{\dot{H}^2(\mathbb{R}^2)} dt
			\lesssim \|\theta\|_{L_T^\infty L^2(\mathbb{R}^2)}
			\|\theta\|_{L_T^\infty \dot{H}^{\frac{\alpha}{2}}(\mathbb{R}^2)}^{\frac{\alpha}{2}}
			\left(
			\|\theta\|_{L_T^2 \dot{H}^{2+\frac{\alpha}{2}}(\mathbb{R}^2)}^{\frac{6-\alpha}{2}}
			+ \|\theta\|_{L_T^2 \dot{H}^2(\mathbb{R}^2)}^{\frac{6-\alpha}{2}}
			\right).\label{estimate 4}
	\end{align}	}
\end{lemma}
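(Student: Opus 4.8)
The plan is to run, in two dimensions, the same scheme that produced Lemma~\ref{interpolation}: first record a handful of Gagliardo--Nirenberg interpolation inequalities on $\mathbb{R}^2$, then insert them into the four space--time integrals, pull the factors measured in $L^\infty_T$ outside the $t$-integral, and close what remains by H\"older's inequality in $t$ followed by Young's inequality.

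The interpolation inequalities I would first establish (by Littlewood--Paley/Bernstein and a single optimised frequency split, exactly as \eqref{u}--\eqref{nabla^2 u} were obtained) are
\begin{align*}
&\|\nabla\theta\|_{L^\infty(\mathbb{R}^2)}\lesssim\|\theta\|_{\dot{H}^{\frac{\alpha}{2}}}^{\frac{\alpha}{4}}\,\|\theta\|_{\dot{H}^{2+\frac{\alpha}{2}}}^{1-\frac{\alpha}{4}},\qquad
\|\theta\|_{\dot{H}^2(\mathbb{R}^2)}\lesssim\|\theta\|_{\dot{H}^{\frac{\alpha}{2}}}^{\frac{\alpha}{4}}\,\|\theta\|_{\dot{H}^{2+\frac{\alpha}{2}}}^{1-\frac{\alpha}{4}},\\
&\|\theta\|_{\dot{H}^{1+\alpha}(\mathbb{R}^2)}\lesssim\|\theta\|_{\dot{H}^{\frac{\alpha}{2}}}^{\frac12-\frac{\alpha}{4}}\,\|\theta\|_{\dot{H}^{2+\frac{\alpha}{2}}}^{\frac12+\frac{\alpha}{4}},\qquad
\|\theta\|_{\dot{H}^1(\mathbb{R}^2)}\lesssim\|\theta\|_{L^2}^{\frac12}\,\|\theta\|_{\dot{H}^2}^{\frac12},
\end{align*}
together with the mixed bound $\|\theta\|_{\dot{H}^1}\lesssim\|\theta\|_{\dot{H}^{\alpha/2}}^{\frac12+\frac{\alpha}{4}}\|\theta\|_{\dot{H}^{2+\alpha/2}}^{\frac12-\frac{\alpha}{4}}$ and the squared forms such as $\|\nabla\theta\|_{L^\infty}^2\lesssim\|\theta\|_{\dot{H}^{\alpha/2}}^{\alpha/2}\|\theta\|_{\dot{H}^{2+\alpha/2}}^{2-\alpha/2}$. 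The appearance of $\alpha$ in every weight, and of $\|\theta\|_{\dot{H}^{\alpha/2}}$ on all four right-hand sides, reflects that the dissipation in \eqref{SQG} upgrades the $\dot{H}^2$ energy only to $L^2_T\dot{H}^{2+\frac{\alpha}{2}}$ --- half a derivative less than the $\dot{H}^2\to\dot{H}^3$ gain available for \eqref{NS} --- so there is strictly less regularity to spend. With these in hand, \eqref{estimate 2} is the model computation: substituting the bounds for $\|\nabla\theta\|_{L^\infty}$, for $\|\theta\|_{\dot{H}^{1+\alpha}}$, and $\|\theta\|_{\dot{H}^1}\lesssim\|\theta\|_{L^2}^{1/2}\|\theta\|_{\dot{H}^2}^{1/2}$, the $\dot{H}^{\alpha/2}$-exponents add up to $\frac12$ and the $\dot{H}^{2+\alpha/2}$-exponents to $\frac32$, so after pulling out $\|\theta\|_{L^\infty_T\dot{H}^{\alpha/2}}^{1/2}\|\theta\|_{L^\infty_TL^2}^{1/2}$ the surviving integrand is $\|\theta\|_{\dot{H}^{2+\alpha/2}}^{3/2}\|\theta\|_{\dot{H}^2}^{1/2}$; H\"older in $t$ with exponents $4/3$ and $4$ bounds it by $\|\theta\|_{L^2_T\dot{H}^{2+\alpha/2}}^{3/2}\|\theta\|_{L^2_T\dot{H}^2}^{1/2}$, and Young gives the symmetric sum on the right. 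The remaining estimates \eqref{estimate 1}, \eqref{estimate 3} and \eqref{estimate 4} follow the same three moves --- substitute, peel off the $L^\infty_T$ norms, then H\"older and Young in time --- the only freedom being how much of each ``bad'' factor one interpolates down to $\dot{H}^{\alpha/2}$ rather than to $\dot{H}^2$.

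\textbf{Main obstacle.} The step I expect to be the real difficulty is precisely this allocation of exponents: one must arrange that the total spatial weight carried by the two $L^2_T$ norms is compatible with the H\"older constraint $\sum 1/p_i=1$ on the time exponents while the leftover powers are exactly the $L^\infty_T$ weights displayed on the right-hand side. Because the dissipative gain is only $\dot{H}^{2+\alpha/2}$, this balance is tight, and the fractional powers $\frac{12-\alpha}{4}$ in \eqref{estimate 1} and $\frac{6-\alpha}{2}$ in \eqref{estimate 3}--\eqref{estimate 4} are the numbers forced by it; making them come out is a matter of bookkeeping rather than of a new idea. No paraproduct decomposition is needed anywhere in Lemma~\ref{SQGinterpolation}, since --- in contrast to the term $\|u\cdot\nabla u\|_{\dot{H}^1}$ handled in \eqref{5/2} --- every factor appearing here is a Sobolev norm of $\theta$ itself.
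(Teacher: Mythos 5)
Your overall scheme is the one the paper uses, and your treatment of \eqref{estimate 2} is correct: the exponent count $\frac{\alpha}{4}+(\frac12-\frac{\alpha}{4})=\frac12$ on $\dot H^{\alpha/2}$ and $(1-\frac{\alpha}{4})+(\frac12+\frac{\alpha}{4})=\frac32$ on $\dot H^{2+\alpha/2}$ is right, and H\"older in time with exponents $4/3$ and $4$ is legitimate since $\frac34+\frac14=1$. The trouble is precisely the step you identify as ``the real difficulty'' and then wave away as bookkeeping: for \eqref{estimate 1}, \eqref{estimate 3} and \eqref{estimate 4} the constraint $\sum 1/p_i=1$ cannot be met by \emph{any} allocation of interpolation weights. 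Take \eqref{estimate 1}. The only factors the right-hand side allows you to put in $L^\infty_T$ are $\|\theta\|_{\dot H^{\alpha/2}}$ (and, in the other two estimates, $\|\theta\|_{L^2}$); everything else must land on $\|\theta\|_{\dot H^{2+\alpha/2}}$ or $\|\theta\|_{\dot H^2}$, which are available only in $L^2_T$, so H\"older in $t$ requires the total power of these two factors in the integrand to be at most $2$. But $\|\theta\|_{\dot H^{2+\alpha/2}}$ costs $1$ outright, and each of $\|\nabla\theta\|_{L^\infty}$ and $\|\theta\|_{\dot H^2}$ costs at least $1-\frac{\alpha}{4}$ (the minimal exponent on $\{\dot H^2,\dot H^{2+\alpha/2}\}$ compatible with the scaling identity behind Gagliardo--Nirenberg, the rest going to $\dot H^{\alpha/2}$). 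The total is at least $3-\frac{\alpha}{2}>2$ for $\alpha\in(0,1)$, so no admissible H\"older exponents exist; the same count fails for \eqref{estimate 3} and \eqref{estimate 4}.

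This is not a gap you can close by a cleverer split, because \eqref{estimate 1}, \eqref{estimate 3} and \eqref{estimate 4} are false as stated for general $\theta$. Test them on $\theta(t,x)=\phi_\delta(t)\psi(x)$ with $\psi$ a fixed Schwartz function and $\phi_\delta$ a bump of height $1$ supported on a time interval of length $\delta$: the left-hand side of \eqref{estimate 1} is of order $\|\phi_\delta\|_{L^3_T}^3\sim\delta$, while the right-hand side is of order $\|\phi_\delta\|_{L^\infty_T}^{\alpha/4}\|\phi_\delta\|_{L^2_T}^{(12-\alpha)/4}\sim\delta^{(12-\alpha)/8}\ll\delta$; likewise the left-hand sides of \eqref{estimate 3}--\eqref{estimate 4} are $\sim\delta$ against right-hand sides $\sim\delta^{(6-\alpha)/4}$. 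To be fair to you, the paper's own proof commits exactly the same illegal step (its first display in the proof of \eqref{estimate 1} is a ``H\"older'' with time exponents summing to $\frac{12-\alpha}{8}>1$), so you have reproduced the source's argument, flaw included. A provable version of these three estimates must retain an $L^\infty_T\dot H^{2}$ or $L^\infty_T\dot H^{2+\alpha/2}$ factor on the right-hand side --- note that the quantity $\widetilde X_T$ of Proposition~\ref{proposition2} controls $\|\theta\|_{L^\infty_T\dot H^2}$, not $\|\theta\|_{L^2_T\dot H^2}$ --- or else accept a $T$-dependent constant; either repair propagates into the proof of Proposition~\ref{proposition2} and should be acknowledged rather than deferred to bookkeeping.
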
		
\begin{proof}
	To prove the above estimates, we need a series of inequalities equipped with Gagliardo-Nirenberg interpolation.
	\begin{align}
		&\|\nabla \theta\|_{L^\infty(\mathbb{R}^2)}
		\lesssim \|\theta\|_{\dot{H}^{2+\frac{\alpha}{2}}(\mathbb{R}^2)}
		^{1-\frac{\alpha}{4}}
		\|\theta\|_{\dot{H}^{\frac{\alpha}{2}}(\mathbb{R}^2)}^{\frac{\alpha}{4}},\label{nabla theta}\\
		&\|\theta\|_{\dot{H}^1(\mathbb{R}^2)}
		\lesssim \|\theta\|_{\dot{H}^2(\mathbb{R}^2)}^{\frac{1}{2}}
		\|\theta\|_{L^2(\mathbb{R}^2)}^{\frac{1}{2}},\label{nabla theta1}\\
		&\|\theta\|_{\dot{H}^{1+\alpha}(\mathbb{R}^2)}
		\lesssim \|\theta\|_{\dot{H}^{2+\frac{\alpha}{2}}(\mathbb{R}^2)}^{\frac{1}{2}+\frac{\alpha}{4}}
		\|\theta\|_{\dot{H}^{\frac{\alpha}{2}}(\mathbb{R}^2)}^{\frac{1}{2}-\frac{\alpha}{4}}.\label{nabla theta2}
	\end{align}	
	Combining with \eqref{nabla theta} and Young's inequality, the following inequality holds true
	\begin{align*}
		\int_{0}^{T} \|\theta\|_{\dot{H}^{2+\frac{\alpha}{2}}(\mathbb{R}^2)}
		\|\nabla \theta\|_{L^\infty(\mathbb{R}^2)} \|\theta\|_{\dot{H}^2(\mathbb{R}^2)} dt
		&\leq \|\theta\|_{L_T^\infty \dot{H}^{\frac{\alpha}{2}}(\mathbb{R}^2)} ^{\frac{\alpha}{4}}
		\|\theta\|_{L_T^2 \dot{H}^{2+\frac{\alpha}{2}}(\mathbb{R}^2)}^{2-\frac{\alpha}{4}}
		\|\theta\|_{L_T^2 \dot{H}^2(\mathbb{R}^2)}\\
		&\lesssim   \|\theta\|_{L_T^\infty \dot{H}^{\frac{\alpha}{2}} (\mathbb{R}^2)}^{\frac{\alpha}{4}}
		\left(
		\|\theta\|_{L_T^2 \dot{H}^{2+\frac{\alpha}{2}}(\mathbb{R}^2)}^{\frac{12-\alpha}{4}} + \|\theta\|_{L_T^2 \dot{H}^2(\mathbb{R}^2)}^{\frac{12-\alpha}{4}}
		\right),
	\end{align*}	
	which completes the proof of \eqref{estimate 1}.\\
	According to \eqref{nabla theta}-\eqref{nabla theta2} and Young's inequality, we obtain
	\begin{align*}
		\int_{0}^T 
		\|\theta\|_{\dot{H}^{1+\alpha}(\mathbb{R}^2)} 
		\|\nabla \theta\|_{L^\infty (\mathbb{R}^2)}
		\|\theta\|_{\dot{H}^1(\mathbb{R}^2)} dt
		&\leq \|\theta\|_{L_T^\infty \dot{H}^{\frac{\alpha}{2}}(\mathbb{R}^2)}^{\frac{1}{2}}
		\|\theta\|_{L_T^\infty L^2(\mathbb{R}^2)}^{\frac{1}{2}}
		\|\theta\|_{L_T^2 \dot{H}^{2+\frac{\alpha}{2}}(\mathbb{R}^2)}^{\frac{3}{2}}
		\|\theta\|_{L_T^2 \dot{H}^2 (\mathbb{R}^2)}^{\frac{1}{2}}\\
		&\lesssim \|\theta\|_{L_T^\infty \dot{H}^{\frac{\alpha}{2}}(\mathbb{R}^2)}^{\frac{1}{2}}
		\|\theta\|_{L_T^\infty L^2(\mathbb{R}^2)}^{\frac{1}{2}}
		\left(
		\|\theta\|_{L_T^2 \dot{H}^{2+\frac{\alpha}{2}}(\mathbb{R}^2)}^2
		+ \|\theta\|_{L_T^2 \dot{H}^2(\mathbb{R}^2)}^2
		\right),
	\end{align*}	
	hence we obtain the estimate in \eqref{estimate 2}.\\
	On account of \eqref{nabla theta} and \eqref{nabla theta1}, we have the following results with the application of Young's inequality, 
	\begin{align*}
		\int_{0}^T
		\|\nabla \theta\|_{L^\infty(\mathbb{R}^2)}^2
		\|\theta\|_{\dot{H}^1(\mathbb{R}^2)}^2 dt
		&\leq \|\theta\|_{L_T^\infty \dot{H}^{\frac{\alpha}{2}}(\mathbb{R}^2)}^{\frac{\alpha}{2}}
		\|\theta\|_{L_T^\infty L^2(\mathbb{R}^2)}
		\|\theta\|_{L_T^2 \dot{H}^{2+\frac{\alpha}{2}}(\mathbb{R}^2)}^{2-\frac{\alpha}{2}}
		\|\theta\|_{L_T^2 \dot{H}^2(\mathbb{R}^2)}\\
		&\lesssim \|\theta\|_{L_T^\infty \dot{H}^{\frac{\alpha}{2}}(\mathbb{R}^2)}^{\frac{\alpha}{2}}
		\|\theta\|_{L_T^\infty L^2(\mathbb{R}^2)}
		\left(
		\|\theta\|_{L_T^2\dot{H}^{2+\frac{\alpha}{2}}(\mathbb{R}^2)}^{\frac{6-\alpha}{2}}
		+ \|\theta\|_{L_T^2 \dot{H}^2(\mathbb{R}^2)}^{\frac{6-\alpha}{2}}
		\right),
	\end{align*}	
	therefore we complete the proof of \eqref{estimate 3}.\\
	Due to \eqref{nabla theta}, \eqref{nabla theta1} and Young's inequality, we arrive at
	\begin{align*}
		\int_{0}^T
		\|\nabla \theta\|_{L^\infty(\mathbb{R}^2)}^2
		\|\theta\|_{L^2 (\mathbb{R}^2)}
		\|\theta\|_{\dot{H}^2(\mathbb{R}^2)} dt
		&\leq \|\theta\|_{L_T^\infty L^2 (\mathbb{R}^2)}
		\|\theta\|_{L_T^\infty \dot{H}^{\frac{\alpha}{2}}(\mathbb{R}^2)}^{\frac{\alpha}{2}}
		\|\theta\|_{L_T^2\dot{H}^{2+\frac{\alpha}{2}}(\mathbb{R}^2)}^{2-\frac{\alpha}{2}}
		\|\theta\|_{L_T^2\dot{H}^2(\mathbb{R}^2)} \\
		&\lesssim \|\theta\|_{L_T^\infty L^2(\mathbb{R}^2)}
		\|\theta\|_{L_T^\infty
			\dot{H}^{\frac{\alpha}{2}}(\mathbb{R}^2)}^{\frac{\alpha}{2}}
		\left(
		\|\theta\|_{L_T^2 \dot{H}^{2+\frac{\alpha}{2}}(\mathbb{R}^2)}^{\frac{6-\alpha}{2}}
		+ \|\theta\|_{L_T^2 \dot{H}^2(\mathbb{R}^2)}^{\frac{6-\alpha}{2}}
		\right),
	\end{align*}	
	thus we have done the proof of \eqref{estimate 4}.
\end{proof}	
\begin{lemma}\label{epsilon}
	Let $f_\epsilon(x)=\left(f\star \rho_\epsilon\right)(x)$, where $\rho_\epsilon(x)=\epsilon^{-d}\rho(x/\epsilon)$ is a modifier, then for $m,m_1,m_2 \in \mathbb{N}$ satisfying $m=m_1+m_2$, we have the following estimates:
	\begin{align*}
		&\|f_\epsilon\|_{\dot{H}^m(\mathbb{R}^d)}\leq \epsilon^{-m_2} \|f\|_{\dot{H}^{m_1}(\mathbb{R}^d)},\\
		&\|\nabla^m f_\epsilon\|_{L^\infty(\mathbb{R}^d)} \leq \epsilon^{-m_2} \|\nabla^{m_1} f\|_{L^\infty(\mathbb{R}^d)}.
	\end{align*}
\end{lemma}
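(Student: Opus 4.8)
The plan is to reduce both inequalities to the elementary fact that convolution with $\rho_\epsilon$ distributes over derivatives and that $\rho_\epsilon$ rescales: $\partial^{\alpha}(f\star\rho_\epsilon)=(\partial^{\beta}f)\star(\partial^{\gamma}\rho_\epsilon)$ whenever $\beta+\gamma=\alpha$, and $\|\partial^{\gamma}\rho_\epsilon\|_{L^1}=\epsilon^{-|\gamma|}\|\partial^{\gamma}\rho\|_{L^1}$ by the change of variables $x\mapsto\epsilon x$. No structural property of $f$ is used. Write $m=m_1+m_2$ with $m_1,m_2\in\mathbb N$, which is exactly what makes $\nabla^{m_1}$ and $\nabla^{m_2}$ honest differential operators acting on $f$ and $\rho_\epsilon$ respectively.

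For the homogeneous Sobolev bound I would pass to Fourier variables, where $\widehat{f_\epsilon}(\xi)=\widehat\rho(\epsilon\xi)\widehat f(\xi)$, so that by Plancherel
\[
\|f_\epsilon\|_{\dot H^m(\mathbb R^d)}^2=\int_{\mathbb R^d}|\xi|^{2m_2}|\widehat\rho(\epsilon\xi)|^2\;|\xi|^{2m_1}|\widehat f(\xi)|^2\,d\xi\le\sup_{\eta\in\mathbb R^d}\bigl(|\eta|^{m_2}|\widehat\rho(\eta)|\bigr)^2\cdot\epsilon^{-2m_2}\,\|f\|_{\dot H^{m_1}(\mathbb R^d)}^2 .
\]
For the $L^\infty$ bound I would instead differentiate directly: $\nabla^{m}f_\epsilon=(\nabla^{m_1}f)\star(\nabla^{m_2}\rho_\epsilon)$, and Young's convolution inequality $L^\infty\star L^1\hookrightarrow L^\infty$ gives $\|\nabla^{m}f_\epsilon\|_{L^\infty}\le\|\nabla^{m_1}f\|_{L^\infty}\,\|\nabla^{m_2}\rho_\epsilon\|_{L^1}=\epsilon^{-m_2}\|\nabla^{m_2}\rho\|_{L^1}\,\|\nabla^{m_1}f\|_{L^\infty}$. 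The same Young-type argument with $L^2\star L^1\hookrightarrow L^2$ also yields the first bound, with constant $\|\nabla^{m_2}\rho\|_{L^1}$ in place of $\sup_\eta|\eta|^{m_2}|\widehat\rho(\eta)|$.

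There is no genuine difficulty here; the one point to watch is the implied constant. Each computation above produces a factor — $\sup_\eta|\eta|^{m_2}|\widehat\rho|$ or $\|\nabla^{m_2}\rho\|_{L^1}$ — depending only on $\rho$ and on $m_2\le m$, not on $f$ or $\epsilon$. To obtain the stated inequalities with constant exactly $1$ one fixes the mollifier $\rho$ once and for all with a normalisation making these finitely many quantities $\le1$ (for instance $\widehat\rho\in C_c^\infty$ supported in the unit ball with $|\widehat\rho|\le1$, or simply rescaling a chosen $\rho$); I would state this choice where the operator $f\mapsto f_\epsilon$ is first used, after which Lemma \ref{epsilon} holds verbatim and can be inserted into the bootstrap without carrying constants.
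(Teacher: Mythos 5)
Your proof is correct and takes essentially the same route as the paper's: both reduce to $\nabla^m f_\epsilon=(\nabla^{m_1}f)\star(\nabla^{m_2}\rho_\epsilon)$ followed by Young's inequality ($L^2\star L^1\hookrightarrow L^2$ and $L^\infty\star L^1\hookrightarrow L^\infty$), the paper merely writing out the $L^2$ case by hand via Cauchy--Schwarz against the measure $|\nabla^{m_2}_y\rho_\epsilon(y)|\,dy$, while your Fourier variant is an equivalent alternative. Your remark about the implied constant is well taken: the paper's proof silently uses $\int|\nabla^{m_2}_y\rho_\epsilon(y)|\,dy\le\epsilon^{-m_2}$, which holds with constant $1$ only under exactly the normalization $\|\nabla^{m_2}\rho\|_{L^1}\le 1$ that you flag.
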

\begin{proof}
	It follows from the property of modifier that
	\begin{align*}
		\| f_\epsilon\|_{\dot{H}^m(\mathbb{R}^d)}
		&\leq \left(
		\int_{\mathbb{R}_x^d} \left(\int_{\mathbb{R}_y^d} |\nabla_x^{m_1} f(x-y)|^2 |\nabla_y^{m_2} \rho_\epsilon(y)|dy\right) \cdot \left(\int_{\mathbb{R}_y^d} |\nabla_y^{m_2} \rho_\epsilon(y)|dy\right) dx
		\right)^{\frac{1}{2}}\\
		&\leq \epsilon^{-\frac{m_2}{2}}\| f\|_{\dot{H}^{m_1}(\mathbb{R}^d)} \left(\int_{\mathbb{R}_y^d}|\nabla_y^{m_2} \rho_\epsilon(y)|dy\right)^{\frac{1}{2}}\\
		&\leq \epsilon^{-m_2} \| f\|_{\dot{H}^{m_1}(\mathbb{R}^d)},
	\end{align*}
	where in the first line we have used H\"{o}lder inequality.\\
	Similarly, it is clear that
	\begin{align*}
		\|\nabla^m f^\epsilon\|_{L^\infty(\mathbb{R}^d)}
		\leq\|\nabla^{m_1} f\|_{L^\infty(\mathbb{R}^d)} \left\|\int_{\mathbb{R}_y^d} \nabla^{m_2}_y \rho_\epsilon(y) dy\right\|_{L^\infty}
		\leq \epsilon^{-m_2} \|\nabla^{m_1} f\|_{L^\infty(\mathbb{R}^d)},
	\end{align*}
	which completes the proof.
\end{proof} 
\begin{remark}
	In \cite{Tang}, authors also yield similar results  in Lemma A.1.
\end{remark}	
Next, we give the following inequality which will be used in the proof of Proposition \ref{proposition1} and  Proposition  \ref{proposition2}.
\begin{lemma}\label{modifier}
	Let $f_\epsilon(x)$ be mentioned in Lemma \ref{epsilon}, then it holds that for $\forall s\in(0,1]$,
	\begin{align*}
		\|f-f_\epsilon\|_{L^2(\mathbb{R}^d)}\leq \epsilon^s \|f\|_{\dot{H}^s(\mathbb{R}^d)}.
	\end{align*}
\end{lemma}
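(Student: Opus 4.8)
The plan is to pass to the Fourier side, where mollification becomes multiplication by the symbol $\widehat{\rho}(\epsilon\,\cdot\,)$. With the usual normalization $\int_{\mathbb{R}^d}\rho\,dx=1$ one has $\widehat{\rho}(0)=1$ and $\widehat{\rho_\epsilon}(\xi)=\widehat{\rho}(\epsilon\xi)$, so $\widehat{f}(\xi)-\widehat{f_\epsilon}(\xi)=\bigl(1-\widehat{\rho}(\epsilon\xi)\bigr)\widehat{f}(\xi)$. By Plancherel (with the Fourier-side characterization of $\dot H^s$),
\begin{align*}
\|f-f_\epsilon\|_{L^2(\mathbb{R}^d)}^{2}=\int_{\mathbb{R}^d}\abs{1-\widehat{\rho}(\epsilon\xi)}^{2}\,\abs{\widehat{f}(\xi)}^{2}\,d\xi ,\qquad \|f\|_{\dot H^{s}(\mathbb{R}^d)}^{2}\sim\int_{\mathbb{R}^d}\abs{\xi}^{2s}\,\abs{\widehat{f}(\xi)}^{2}\,d\xi ,
\end{align*}
so the entire statement reduces to the single pointwise symbol bound $\abs{1-\widehat{\rho}(\eta)}\lesssim\abs{\eta}^{s}$ for all $\eta\in\mathbb{R}^d$ and all $s\in(0,1]$; substituting $\eta=\epsilon\xi$ then yields $\|f-f_\epsilon\|_{L^2}^{2}\lesssim\epsilon^{2s}\int\abs{\xi}^{2s}\abs{\widehat{f}(\xi)}^{2}\,d\xi=\epsilon^{2s}\|f\|_{\dot H^{s}}^{2}$.

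To prove the symbol bound I would combine two elementary estimates and interpolate between them. First, $\abs{1-\widehat{\rho}(\eta)}\le 1+\|\widehat{\rho}\|_{L^\infty}\le 2$, since $\|\widehat{\rho}\|_{L^\infty}\le\|\rho\|_{L^1}=1$. Second, writing $1-\widehat{\rho}(\eta)=\int_{\mathbb{R}^d}\rho(x)\bigl(1-e^{-ix\cdot\eta}\bigr)\,dx$ and using $\abs{1-e^{-i\theta}}=2\abs{\sin(\theta/2)}\le\abs{\theta}$ gives the Lipschitz-type bound $\abs{1-\widehat{\rho}(\eta)}\le\abs{\eta}\int_{\mathbb{R}^d}\abs{x}\,\rho(x)\,dx$, the integral being finite under the harmless first-moment normalization of the mollifier. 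Splitting $\abs{1-\widehat{\rho}(\eta)}=\abs{1-\widehat{\rho}(\eta)}^{s}\,\abs{1-\widehat{\rho}(\eta)}^{1-s}$ and applying the Lipschitz bound to the first factor and the bound $2$ to the second produces $\abs{1-\widehat{\rho}(\eta)}\lesssim\abs{\eta}^{s}$ uniformly for $s\in(0,1]$, as required. (Alternatively, and more transparently, one can avoid the outer Fourier transform: write $f(x)-f_\epsilon(x)=\int_{\mathbb{R}^d}\rho(z)\bigl(f(x)-f(x-\epsilon z)\bigr)\,dz$, apply Minkowski's integral inequality, and insert the translation estimate $\|f(\cdot)-f(\cdot-h)\|_{L^2}\lesssim\abs{h}^{s}\|f\|_{\dot H^{s}}$ — itself a one-line Fourier computation from $\abs{1-e^{-ih\cdot\xi}}\le\min(2,\abs{h}\abs{\xi})\le 2^{1-s}\abs{h}^{s}\abs{\xi}^{s}$ — to get $\|f-f_\epsilon\|_{L^2}\lesssim\epsilon^{s}\|f\|_{\dot H^{s}}\int_{\mathbb{R}^d}\rho(z)\abs{z}^{s}\,dz$.)

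I do not expect a genuine obstacle here: this is just the standard quantitative rate of convergence of a mollification. Two points deserve a little care. The first is to treat the whole interval $s\in(0,1]$ at once — the interpolation step above does precisely that, whereas a naive argument settles the endpoint $s=1$ and leaves $0<s<1$ unaddressed. The second is the implicit constant: as written, the estimate is really $\|f-f_\epsilon\|_{L^2}\lesssim\epsilon^{s}\|f\|_{\dot H^{s}}$ with a constant of size $O(1)$ depending only on the fixed mollifier $\rho$ (through $\|\rho\|_{L^1}$ and its first moment), so the bare inequality sign in the statement should be read in the same spirit as the analogous estimates in Lemma \ref{epsilon}, i.e. after a normalization of $\rho$. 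In either reading the bound is exactly what is needed for the later uses in Propositions \ref{proposition1} and \ref{proposition2}.
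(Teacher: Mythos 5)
Your proof is correct, but it takes a genuinely different route from the paper's. The paper stays entirely on the physical side: it writes $f-f_\epsilon$ as an average of differences $f(x)-f(x-y)$ against $\rho_\epsilon$, applies Minkowski's integral inequality, and then recognizes the Gagliardo--Slobodeckij double integral $\int\!\!\int |f(x)-f(x-y)|^2/|y|^{d+2s}\,dx\,dy$ as the $\dot H^s$ seminorm for $s\in(0,1)$, with a separate Newton--Leibniz argument for the endpoint $s=1$. You instead work on the Fourier side, reducing everything to the symbol bound $\abs{1-\widehat{\rho}(\eta)}\lesssim\abs{\eta}^s$ obtained by interpolating between the trivial bound $2$ and the first-moment Lipschitz bound; your parenthetical alternative (Minkowski plus a translation estimate) mirrors the paper's structure but still proves the translation estimate via Fourier. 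What your approach buys: a single argument uniform over all $s\in(0,1]$ with no endpoint case split, explicit dependence of the constant on $\rho$ only through $\|\rho\|_{L^1}$ and its first moment, and no reliance on the equivalence between the Gagliardo seminorm and the Fourier-side $\dot H^s$ norm (an equivalence the paper implicitly invokes, with a constant it suppresses, when it ends its chain of inequalities with an equality sign). What the paper's approach buys is that it never needs $\widehat{\rho}$ or any moment condition beyond $\rho\in L^1\cap L^2$ with the normalization used in its Cauchy--Schwarz step. Your remark that the stated bare inequality should really be read as $\lesssim$ with a constant depending on the fixed mollifier applies equally to the paper's own proof, and is the right way to read the lemma given how it is used in Propositions \ref{proposition1} and \ref{proposition2}.
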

\begin{proof}
	The trick of the proof is the application of Minkowski's inequality. Due to the definition of modifier and fractional Sobolev space, it is evident that for $s\in(0,1)$,
	\begin{align*}
		\|f-f_\epsilon\|_{L^2(\mathbb{R}^d)}
		&=\left(
		\int_{\mathbb{R}_x^d} \left|\int_{\mathbb{R}_y^d} \left(f(x)-f(x-y)\right) \rho_\epsilon(y) dy\right|^2 dx
		\right)^{\frac{1}{2}}\\
		&\leq \int_{\mathbb{R}_y^d}  |y|^{\frac{d}{2}+s} \rho_\epsilon(y)
		\left(
		\int_{\mathbb{R}_x^d} \frac{\left|f(x)-f(x-y)\right|^2 }{|y|^{d+2s}}dx
		\right)^{\frac{1}{2}}  dy\\
		&\leq\left(\int_{\mathbb{R}_y^d} \int_{\mathbb{R}_x^d}
		\frac{\left|f(x)-f(x-y)\right|^2 }{|y|^{d+2s}} dx dy
		\right)^{\frac{1}{2}} 
		\left(
		\int_{\mathbb{R}_y^d} |y|^{d+2s} \rho_\epsilon^2(y) dy
		\right)^{\frac{1}{2}} \\
		&=\epsilon^s \|f\|_{\dot{H}^s(\mathbb{R}^d)}.
	\end{align*}
	Moreover, for $s=1$, we obtain directly by Newton-Leibniz formula,
	\begin{align*}
		\|f-f_\epsilon\|_{L^2(\mathbb{R}^d)} 
		&\leq \epsilon\int_{\mathbb{R}_y^d} \rho_\epsilon(y)\left(
		\int_{\mathbb{R}_x^d} \left(\int_{0}^{1} \nabla f(x+ty)dt\right)^2 dx 
		\right)^{\frac{1}{2}}  dy\\
		&\leq \epsilon \int_{\mathbb{R}_y^d} \rho_\epsilon(y)\left(\int_{0}^{1} \left(\int_{\mathbb{R}_x^d} \left(\nabla f(x+ ty)\right)^2 dx\right)^{\frac{1}{2}} dt\right)  dy\\
		&=\epsilon \|\nabla f\|_{L^2(\mathbb{R}^d)}.
	\end{align*}
	Hence, an easy result is given.
\end{proof}
\section{Global large solution to  3D incompressible Navier-Stokes equations with time oscillation}
\qquad
Before giving the proof of Theorem \ref{thm1.1}, we firstly give a useful proposition which is indispensable in the theorem proving process.
 For simplicity, we introduce  the following notation:
\begin{align*}
	L_T^pX=L^p(0,T;X).
\end{align*}
\begin{proposition} \label{proposition1}
	Let $u$ be a solution of \eqref{NS} and denote
	\begin{equation}
		X_T=\|u\|_{L^\infty_T\dot{H}^2(\mathbb{R}^3)}^2+\|u\|_{L^2_T\dot{H}^3(\mathbb{R}^3)}^2,
	\end{equation}
	then we have 
	\begin{equation}
		X_T\lesssim \|u_0\|_{\dot{H}^2(\mathbb{R}^3)}^2+\frac{1}{\sqrt[3]{N}}\left(\|u_0\|_{ L^2(\mathbb{R}^3)}^{\frac{3}{4}} 
		X_T^{\frac{9}{8}}+\|u_0\|_{ L^2(\mathbb{R}^3)}^\frac{3}{2} 
		X_T^{\frac{5}{4}}+\|u_0\|_{H^2(\mathbb{R}^3)}^3\right)^{\frac{1}{3}}\|u_0\|_{L^2(\mathbb{R}^3)}^{\frac{1}{6}} X_T^{\frac{11}{12}}.
	\end{equation}
\end{proposition}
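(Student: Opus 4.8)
The plan is to close a high‑regularity energy estimate in $\dot H^2$, extracting the gain $N^{-1/3}$ from the time oscillation of $b$ by integrating by parts in time against a bounded primitive of $b$, and using the modifier $f\mapsto f_\epsilon$ of Lemma \ref{epsilon} to absorb the derivative loss this creates. First, testing the momentum equation in \eqref{NS} with $u$ annihilates the nonlinearity and the pressure (since $\nabla\cdot u=0$), giving $\|u\|_{L^\infty_TL^2}\le\|u_0\|_{L^2}$ and $\|u\|_{L^2_T\dot H^1}\le\tfrac1{\sqrt2}\|u_0\|_{L^2}$; this is where every $\|u_0\|_{L^2}$ in the statement comes from. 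Applying $\Delta$ to the momentum equation and testing with $\Delta u$ (the pressure drops since $\nabla\cdot\Delta u=0$, the viscous term yields $\|u\|_{\dot H^3}^2$) gives, for all $t\le T$,
\[
\|u(t)\|_{\dot H^2}^2+2\int_0^t\|u\|_{\dot H^3}^2\,d\tau=\|u_0\|_{\dot H^2}^2-2\int_0^tb(N\tau)\,G(\tau)\,d\tau,\qquad G(\tau):=\bigl\langle\Delta(u\cdot\nabla u),\Delta u\bigr\rangle(\tau).
\]
Taking the supremum over $t\in[0,T]$ and adding the two resulting bounds, the proposition reduces to $\sup_{t\le T}\bigl|\int_0^tb(N\tau)G(\tau)\,d\tau\bigr|\lesssim N^{-1/3}A^{1/3}\|u_0\|_{L^2}^{1/6}X_T^{11/12}$ with $A:=\|u_0\|_{L^2}^{3/4}X_T^{9/8}+\|u_0\|_{L^2}^{3/2}X_T^{5/4}+\|u_0\|_{H^2}^3$.

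Fix $\epsilon>0$ and set $\beta(\tau):=\tfrac1N\int_0^{N\tau}b(s)\,ds$, so that $\beta'(\tau)=b(N\tau)$, $\beta(0)=0$ and $\|\beta\|_{L^\infty}\le M/N$ by \eqref{oscillation}. Replace all three occurrences of $u$ in $G$ by $u_\epsilon$, writing $G=G_\epsilon+R_\epsilon$ with $G_\epsilon:=\bigl\langle\Delta(u_\epsilon\cdot\nabla u_\epsilon),\Delta u_\epsilon\bigr\rangle$. For the remainder the oscillation is not needed: $\|b\|_{L^\infty}\le M$, Lemma \ref{modifier} (with $s=1$, applied to $\nabla u$ and to $\Delta u$) and the Leibniz rule give $|R_\epsilon|\lesssim\epsilon\bigl(\|\nabla u\|_{L^\infty}\|u\|_{\dot H^2}+\|u\|_{L^\infty}\|u\|_{\dot H^3}\bigr)\|u\|_{\dot H^3}$, whence by \eqref{11/4}, the Gagliardo--Nirenberg bound $\|u\|_{L^\infty}\lesssim\|u\|_{\dot H^1}^{1/2}\|u\|_{\dot H^2}^{1/2}$, and the first step, $\bigl|\int_0^tbR_\epsilon\,d\tau\bigr|\lesssim\epsilon\,\|u_0\|_{L^2}^{1/4}X_T^{11/8}$.

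For the regularized part, integrate by parts in time: $\int_0^tb(N\tau)G_\epsilon\,d\tau=\beta(t)G_\epsilon(t)-\int_0^t\beta(\tau)G_\epsilon'(\tau)\,d\tau$. Differentiating $G_\epsilon$ in time and substituting $\partial_\tau u=\Delta u-b(N\tau)\mathbb P(u\cdot\nabla u)$ (the pressure dropping after projecting onto divergence-free fields), every resulting term is built from $u_\epsilon$, so Lemma \ref{epsilon} converts the two extra derivatives produced by the viscous term into a factor $\epsilon^{-2}$, while keeping the worst pieces in divergence form so the contractions $\langle u_\epsilon\cdot\nabla(\cdot),(\cdot)\rangle$ still vanish. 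After Gagliardo--Nirenberg interpolation the surviving homogeneous norms are, up to this $\epsilon^{-2}$, precisely those of the left-hand sides of \eqref{9/4} and \eqref{5/2}, so those inequalities together with the first step give $\bigl|\int_0^t\beta\,G_\epsilon'\,d\tau\bigr|\lesssim\tfrac1N\epsilon^{-2}\bigl(\|u_0\|_{L^2}^{3/4}X_T^{9/8}+\|u_0\|_{L^2}^{3/2}X_T^{5/4}\bigr)$. The endpoint term $\beta(t)G_\epsilon(t)$ has no time integration available to be absorbed into the dissipation; since all its factors are regularized, Lemma \ref{epsilon} trades its full $\dot H^3$-content for powers of $\epsilon^{-1}$, and then $\|u(t)\|_{\dot H^2}\le X_T^{1/2}$ with the first step bound it by the remaining $\tfrac1N\epsilon^{-2}\|u_0\|_{H^2}^3$. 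Collecting, $\sup_{t\le T}\bigl|\int_0^tbG\,d\tau\bigr|\lesssim\epsilon\,\|u_0\|_{L^2}^{1/4}X_T^{11/8}+\tfrac1N\epsilon^{-2}A$, and minimizing over $\epsilon>0$ (the balance being $\epsilon^{3}\sim A\bigl(N\|u_0\|_{L^2}^{1/4}X_T^{11/8}\bigr)^{-1}$) gives $\sup_{t\le T}\bigl|\int_0^tbG\bigr|\lesssim N^{-1/3}A^{1/3}\bigl(\|u_0\|_{L^2}^{1/4}X_T^{11/8}\bigr)^{2/3}=N^{-1/3}A^{1/3}\|u_0\|_{L^2}^{1/6}X_T^{11/12}$. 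Combined with the identity above and $\|u_0\|_{\dot H^2}\le\|u_0\|_{H^2}$, this is the claim.

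The one non-routine step is the integration by parts in time. It is the only way to see the oscillation, but it drags $\partial_\tau u$, hence via the viscous term two extra spatial derivatives, into the nonlinear functional, whereas $X_T$ controls neither $L^\infty_T\dot H^3$ nor $L^2_T\dot H^4$. The modifier rescues this, but one has to insert it \emph{before} differentiating in time, preserve the divergence-free cancellations term by term, apportion the powers $\epsilon^{-1}$ so that the surviving norms reproduce exactly the three combinations of Lemma \ref{interpolation} and nothing worse, and control the endpoint term $\beta(t)G_\epsilon(t)$, which must be bounded purely by $\|u_0\|_{L^2}$- and $\|u\|_{L^\infty_T\dot H^2}$-quantities since no time integration is left to exploit. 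The exact value $1/3$ of the exponent is the outcome of the $\epsilon$ versus $N^{-1}\epsilon^{-2}$ trade-off.
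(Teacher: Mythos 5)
Your proposal follows essentially the same route as the paper's proof: an $\dot H^2$ energy identity, a mollifier decomposition into a remainder (estimated directly via Lemma \ref{modifier} and \eqref{11/4} with a factor $\epsilon$) and a regularized main part (integrated by parts in time against $\tfrac1N\int_0^tb(N\tau)\,d\tau$, with $\partial_t u$ replaced via the equation, Lemma \ref{epsilon} supplying $\epsilon^{-2}$, and \eqref{9/4}, \eqref{5/2} closing the estimate), followed by the identical optimization in $\epsilon$. The only cosmetic deviations — mollifying all three factors instead of just the highest-order ones, and folding the $t=T$ boundary contribution $\|u_0\|_{L^2}^{3/4}X_T^{9/8}$ into $A$ rather than listing it separately — do not change the argument or the resulting bound.
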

\begin{proof}
	Multiply $\Delta^2 u$ as test function on \eqref{NS} and integral over $dx$, we get 
	\begin{align}\label{dx1}
		\frac{1}{2}\partial_t \|\Delta u(t)\|_{L^2(\mathbb{R}^3)}^2
		+\|\Delta u(t)\|_{\dot{H}^1(\mathbb{R}^3)}^2
		=-b(Nt)\int_{\mathbb{R}^3}\nabla \cdot (\Delta(u\otimes u))\cdot \Delta u dx- \int_{\mathbb{R}^3}\nabla p\cdot \Delta^2 u dx.
	\end{align}
	Since $u$ is divergence free, we obtain that the second term on right hand-side equals $0$, i.e.
	\begin{align*}
		\int_{\mathbb{R}^3}\nabla p\cdot \Delta^2 u dx=-\int_{\mathbb{R}^3} p\cdot \Delta^2 (\nabla\cdot u) dx=0.	
	\end{align*}
	For the first term on the right hand-side, we compute as follows:
	\begin{align*}
		\int_{\mathbb{R}^3}\nabla \cdot (\Delta(u\otimes u))\cdot \Delta u dx&=\sum_{i,j}\int_{\mathbb{R}^3} \partial_i \left(\Delta(u^j u^i)\right)\Delta u^j dx\\
		&=\sum_{i,j} \int_{\mathbb{R}^3} \left(\partial_i \left(\Delta u^j u^i\right)\Delta u^j+\partial_i \left( u^j \Delta u^i\right)\Delta u^j+\partial_i \left(\nabla  u^j \cdot\nabla u^i\right)\Delta u^j\right) dx\\
		&=\sum_{i,j} \int_{\mathbb{R}^3} \left(\partial_i \left(\Delta u^j \right)u^i\Delta u^j+\partial_i u^i \Delta u^j\Delta u^j+\partial_i \left(  \Delta u^i\right)u^j\Delta u^j\right.\\
		&\qquad	\left. +\partial_i u^j\Delta u^i\Delta u^j
		+ \nabla (\partial_i u^j)\cdot \nabla u^i\Delta u^j+\nabla (\partial_i u^i)\cdot \nabla u^j\Delta u^j\right) dx.
	\end{align*}
	Applying divergence free property again, one has
	\begin{align*}
		\sum_{i,j}\partial_i \left(  \Delta u^i\right)u^j\Delta u^j=\sum_{j}\Delta\left(\sum_{i} \partial_iu^i\right)u^j\Delta u^j=0.
	\end{align*}
	We also note that
	\begin{align*}
		\int_{\mathbb{R}^3}\partial_i \left(\Delta u^j \right)u^i\Delta u^j dx=-	\int_{\mathbb{R}^3} \Delta u^j \partial_iu^i\Delta u^j dx -	\int_{\mathbb{R}^3}\Delta u^j u^i\partial_i \left(\Delta u^j \right)dx,
	\end{align*}
	which implies that
	\begin{align*}
		\int_{\mathbb{R}^3}\partial_i \left(\Delta u^j \right)u^i\Delta u^j dx=-\frac{1}{2}	\int_{\mathbb{R}^3} \Delta u^j \partial_iu^i\Delta u^j dx.
	\end{align*}
	Thus we have
	\begin{align*}
		&	\int_{\mathbb{R}^3}\nabla \cdot (\Delta(u\otimes u))\cdot \Delta u dx\\
		&\qquad=\sum_{i,j} \int_{\mathbb{R}^3} \left(\frac{1}{2}\partial_i u^i \Delta u^j\Delta u^j+\partial_i u^j\Delta u^i\Delta u^j
		+ \nabla (\partial_i u^j)\cdot \nabla u^i\Delta u^j+\nabla (\partial_i u^i)\cdot \nabla u^j\Delta u^j\right) dx.
	\end{align*}
	
	Now we take integral over time about the equality \eqref{dx1}, we have
	\begin{align*}
		&	\frac{1}{2}\|\Delta u(T)\|_{L^2(\mathbb{R}^3)}^2	-\frac{1}{2}\|\Delta u_0\|_{L^2(\mathbb{R}^3)}^2+\int_{0}^{T}\|\Delta u(t)\|_{\dot{H}^1(\mathbb{R}^3)}^2\\
		&=-\sum_{i,j}\int_{0}^{T}\int_{\mathbb{R}^3} b(Nt)\left(\frac{1}{2}\partial_i u^i \Delta u^j\Delta u^j+\partial_i u^j\Delta u^i\Delta u^j
		+ \nabla (\partial_i u^j)\cdot \nabla u^i\Delta u^j+\nabla (\partial_i u^i)\cdot \nabla u^j\Delta u^j\right)  dxdt
	\end{align*}
	Let $u_\epsilon(t,x)=\left(u(t)\star \rho_\epsilon\right)(x)$, where $\rho_\epsilon(x)=\epsilon^{-3}\rho(x/\epsilon)$ is a modifier. Then we divide the right hand-side into two parts,
	\begin{align*}
		RHS=\mathcal{I}+\mathcal{J},
	\end{align*}
	where
	\begin{align*}
		\mathcal{I}:=&-\sum_{i,j}\int_{0}^{T}\int_{\mathbb{R}^3} b(Nt)\bigg(\frac{1}{2}\partial_i u^i \left(\Delta u^j\Delta u^j-\Delta u^j_\epsilon\Delta u^j_\epsilon\right)+\partial_i u^j\left(\Delta u^i\Delta u^j-\Delta u^i_\epsilon\Delta u^j_\epsilon\right)\bigg.\\
		&\qquad\bigg.+ \left(\Delta u^j\nabla (\partial_i u^j)-\Delta u^j_\epsilon\nabla (\partial_i u^j_\epsilon)\right)\cdot \nabla u^i+\left(\Delta u^j\nabla (\partial_i u^i)-\Delta u^j_\epsilon\nabla (\partial_i u^i_\epsilon)\right)\cdot \nabla u^j\bigg)dxdt,\\
		\mathcal{J}:=&-\sum_{i,j}\int_{0}^{T}\int_{\mathbb{R}^3} b(Nt)\left(\frac{1}{2}\partial_i u^i \Delta u^j_\epsilon\Delta u^j_\epsilon+\partial_i u^j\Delta u^i_\epsilon\Delta u^j_\epsilon
		+ \nabla (\partial_i u^j_\epsilon)\cdot \nabla u^i\Delta u^j_\epsilon+\nabla (\partial_i u^i_\epsilon)\cdot \nabla u^j\Delta u^j_\epsilon\right)  dxdt.
	\end{align*}
	Applying the fact that $	|f-f_\epsilon|\lesssim\epsilon\sup|\nabla f|$, we firstly estimate the first term  of $\mathcal{I}$ as follows,
	\begin{align*}
		&-\sum_{i,j}\int_{0}^{T}\int_{\mathbb{R}^3}\frac{1}{2} b(Nt)\partial_i u^i \left(\Delta u^j\Delta u^j-\Delta u^j_\epsilon\Delta u^j_\epsilon\right)dxdt\lesssim M\sum_{i,j}\int_{0}^{T}\int_{\mathbb{R}^3} |\partial_i u^i| \left|\Delta u^j\Delta u^j-\Delta u^j_\epsilon\Delta u^j_\epsilon\right|dxdt\\
		&\qquad\lesssim M\sum_{i,j}\int_{0}^{T}\int_{\mathbb{R}^3} |\partial_i u^i|\left( \left|\Delta u^j(\Delta u^j-\Delta u^j_\epsilon)\right|+\left|\Delta u^j_\epsilon(\Delta u^j-\Delta u^j_\epsilon)\right|\right)dxdt.
	\end{align*}
	For the first sum on the right hand-side, applying H\"{o}lder inequality and Lemma \ref{modifier} we can deduce that
	\begin{align*}
		\sum_{i,j}\int_{0}^{T}\int_{\mathbb{R}^3} |\partial_i u^i|\left|\Delta u^j(\Delta u^j-\Delta u^j_\epsilon)\right|dxdt
		&\leq \sum_{i,j}\int_{0}^{T}\|\partial_i u^i\|_{L^\infty(\mathbb{R}^3)}\|\Delta u^j\|_{L^2(\mathbb{R}^3)}\|\Delta u^j-\Delta u^j_\epsilon\|_{L^2(\mathbb{R}^3)} dt\\
		&\leq \epsilon \sum_{i,j}\int_{0}^{T}\|\partial_i u^i\|_{L^\infty(\mathbb{R}^3)}\|\Delta u^j\|_{L^2(\mathbb{R}^3)}\|\nabla^3 u^j\|_{L^2(\mathbb{R}^3)} dt.
	\end{align*}
	Concerning the second sum on the right hand-side, an argument similar to the first sum shows that
	\begin{align*}
		\sum_{i,j}\int_{0}^{T}\int_{\mathbb{R}^3} |\partial_i u^i|\left|\Delta u^j_\epsilon(\Delta u^j-\Delta u^j_\epsilon)\right|dxdt
		&\leq \epsilon \sum_{i,j}\int_{0}^{T}\|\partial_i u^i\|_{L^\infty(\mathbb{R}^3)}\|\Delta u_\epsilon^j\|_{L^2(\mathbb{R}^3)}\|\nabla^3 u^j\|_{L^2(\mathbb{R}^3)} dt.
	\end{align*}
	Using similar estimate, we obtain that 
	\begin{align*}
		\mathcal{I}
		&\lesssim \epsilon M \int_{0}^{T} \|\nabla u\|_{L^\infty(\mathbb{R}^3)} \|\nabla^2 u\|_{L^2(\mathbb{R}^3)} \|\nabla^3 u\|_{L^2(\mathbb{R}^3)} dt\\
		&\lesssim \epsilon M \| u\|_{L_T^\infty L^2(\mathbb{R}^3)}^{\frac{1}{4}} \left( \|\nabla^2u\|_{L_T^\infty L^2(\mathbb{R}^3)}^{\frac{11}{4}} + \|\nabla^3u\|_{L_T^2 L^2(\mathbb{R}^3)}^{\frac{11}{4}}\right)\\
		&\leq \epsilon M \|u_0\|_{L^2(\mathbb{R}^3)}^{\frac{1}{4}} X_T^{\frac{11}{8}},
	\end{align*} 
	where in the second inequality we use \eqref{11/4}.\\
	For the term $\mathcal{J}$, without loss of generality, we only show the estimate of the third term of $\mathcal{J}$. We integral by parts over $dt$ to get
	\begin{align*}
		&	-\int_{0}^{T}\int_{\mathbb{R}^3} b(Nt) \nabla (\partial_i u^j_\epsilon)\cdot \nabla u^i\Delta u^j_\epsilon dxdt
		=-\frac{1}{N} 	\int_{0}^{T}\int_{\mathbb{R}^3} \partial_t\left(\int_{0}^{t} b(N\tau)d\tau\right) \nabla (\partial_i u^j_\epsilon)\cdot \nabla u^i\Delta u^j_\epsilon dxdt\\
		&=\frac{1}{N} \int_{0}^{T}\int_{\mathbb{R}^3} \left(\int_{0}^{t} b(N\tau)d\tau\right)\partial_t\left[ \nabla (\partial_i u^j_\epsilon)\cdot \nabla u^i\Delta u^j_\epsilon \right]dxdt
		-\frac{1}{N} 	\int_{\mathbb{R}^3} \left(\int_{0}^{t} b(N\tau)d\tau\right) \nabla (\partial_i u^j_\epsilon)\cdot \nabla u^i\Delta u^j_\epsilon dx\vert_{t=0}^{t=T}\\
		&=:\mathcal{J}^1+\mathcal{J}^2.
	\end{align*}
	As for the term $\mathcal{J}^1$, we write
	\small{\begin{align*}
			\mathcal{J}^1
			=&\frac{1}{N} 	\int_{0}^{T}\int_{\mathbb{R}^3} \left(\int_{0}^{t} b(N\tau)d\tau\right)\nabla (\partial_i u^j_\epsilon)\cdot \nabla (\partial_tu^i)\Delta u^j_\epsilon dxdt
			+\frac{1}{N} 	\int_{0}^{T}\int_{\mathbb{R}^3} \left(\int_{0}^{t} b(N\tau)d\tau\right)\nabla (\partial_i \partial_tu^j_\epsilon)\cdot \nabla u^i\Delta u^j_\epsilon dxdt\\
			&+\frac{1}{N} 	\int_{0}^{T}\int_{\mathbb{R}^3} \left(\int_{0}^{t} b(N\tau)d\tau\right)\nabla (\partial_i u^j_\epsilon)\cdot \nabla (u^i)\Delta(\partial_t u^j_\epsilon) dxdt=:	\mathcal{J}^1_1+	\mathcal{J}^1_2+	\mathcal{J}^1_3.
	\end{align*}}
	To treat the term $\mathcal{J}_1^1$,  we make use of the fact that
	$\partial_t u=\Delta u-\mathbb{P}(b(Nt)u\cdot \nabla u)$,
	where $\mathbb{P}f:=f-\nabla \Delta^{-1}(\nabla \cdot f)$ representing Leray projection. Thus we have
	\begin{align*}
		\mathcal{J}_1^1
		&\leq \frac{M}{N}\int_{0}^{T} \|\nabla
		\left(\Delta u^i-\mathbb{P}(b(Nt)u\cdot \partial_i u\right)\|_{L^2(\mathbb{R}^3)} \|\nabla (\partial_i u^j_\epsilon)\|_{L^2(\mathbb{R}^3)} \|\Delta u_\epsilon^j\|_{L^\infty(\mathbb{R}^3)} dt
		\\
		&\lesssim \frac{M}{N}\int_{0}^{T} \| u\|_{\dot{H}^3(\mathbb{R}^3)} \| u_\epsilon\|_{\dot{H}^2(\mathbb{R}^3)} \|\Delta u_\epsilon\|_{L^\infty(\mathbb{R}^3)} 
		+  \|\nabla \mathbb{P} (u\cdot \nabla u)\|_{L^2(\mathbb{R}^3)} \| u_\epsilon\|_{\dot{H}^2(\mathbb{R}^3)} \|\Delta u_\epsilon\|_{L^\infty(\mathbb{R}^3)}  dt.
	\end{align*}
	Applying Lemma \ref{epsilon} and the property of Leray projection, we have
	\begin{align*}
		\mathcal{J}_1^1\lesssim \frac{M}{N\epsilon^2}\int_{0}^{T} \| u\|_{\dot{H}^3(\mathbb{R}^3)} \| u\|_{\dot{H}^1(\mathbb{R}^3)} \|\nabla u\|_{L^\infty(\mathbb{R}^3)} dt
		+  \frac{M}{N\epsilon^2} \int_{0}^{T} \|u \cdot \nabla u\|_{\dot{H}^1(\mathbb{R}^3)} \|u\|_{\dot{H}^1(\mathbb{R}^3)} \|\nabla u\|_{L^\infty(\mathbb{R}^3)} dt.
	\end{align*}
	For the rest term $\mathcal{J}^1_2$ and $\mathcal{J}^1_3$, we simply note that
	\begin{align*}
		\mathcal{J}^1_2+\mathcal{J}^1_3
		&\lesssim \frac{M}{N}\int_{0}^{T} \|\nabla^2
		\left(\Delta u_\epsilon-\mathbb{P}(b(Nt)u_\epsilon\cdot \nabla u_\epsilon)\right)\|_{L^2(\mathbb{R}^3)} \|\nabla u\|_{L^2(\mathbb{R}^3)} \|\nabla^2 u_\epsilon\|_{L^\infty(\mathbb{R}^3)} dt\\
		&\lesssim \frac{M}{N} \int_{0}^{T} \| u_\epsilon\|_{\dot{H}^4(\mathbb{R}^3)} \| u\|_{\dot{H}^1(\mathbb{R}^3)} \|\nabla^2 u^\epsilon\|_{L^\infty(\mathbb{R}^3)} 
		+  \|\nabla^2 \mathbb{P} (u_\epsilon\cdot \nabla u_\epsilon)\|_{L^2(\mathbb{R}^3)} \| u\|_{\dot{H}^1(\mathbb{R}^3)} \|\nabla^2 u^\epsilon\|_{L^\infty(\mathbb{R}^3)}   dt\\
		&	\lesssim\frac{M}{N\epsilon^2}\int_{0}^{T} \| u\|_{\dot{H}^3(\mathbb{R}^3)} \| u\|_{\dot{H}^1(\mathbb{R}^3)} \|\nabla u\|_{L^\infty(\mathbb{R}^3)} dt
		+  \frac{M}{N\epsilon^2} \int_{0}^{T} \|u \cdot \nabla u\|_{\dot{H}^1(\mathbb{R}^3)} \|u\|_{\dot{H}^1(\mathbb{R}^3)} \|\nabla u\|_{L^\infty(\mathbb{R}^3)} dt,
	\end{align*}
	where in the last line we use Lemma $\ref{epsilon}$.
	Therefore, combining \eqref{9/4} and \eqref{5/2} in Lemma \ref{interpolation}, we conclude that 
	\begin{align*}
		\mathcal{J}^1&\lesssim\frac{M}{N\epsilon^2}\|u\|_{L^\infty_T L^2(\mathbb{R}^3)}^{\frac{3}{4}} 
		\left(
		\| u\|_{L^\infty_T\dot{H}^2(\mathbb{R}^3)}^{\frac{9}{4}} + \| u\|_{L_T^2\dot{H}^3(\mathbb{R}^3)}^{\frac{9}{4}}\right)+\frac{M}{N\epsilon^2}\|u\|_{L_T^\infty L^2(\mathbb{R}^3)}^\frac{3}{2} 
		\left(
		\|u\|_{L_T^\infty \dot{H}^2(\mathbb{R}^3)}^{\frac{5}{2}}
		+ \|u\|_{L_T^2 \dot{H}^3(\mathbb{R}^3)}^{\frac{5}{2}}
		\right)\\
		&\lesssim\frac{M}{N\epsilon^2}\left(\|u_0\|_{L^2(\mathbb{R}^3)}^{\frac{3}{4}} 	X_T^{\frac{9}{8}}
		+\|u_0\|_{ L^2(\mathbb{R}^3)}^\frac{3}{2} 
		X_T^{\frac{5}{4}}\right),
	\end{align*}
	here in the last line we use  energy inequality,
	\begin{equation}
		\|u\|_{L_T^\infty L^2(\mathbb{R}^3)}^2+\int_{0}^{\infty}\|u(t)\|_{\dot{H}^1(\mathbb{R}^3)}^2 dt\leq 2\|u_0\|_{L^2(\mathbb{R}^3)}^2.
	\end{equation}
	As for the term $\mathcal{J}^2$,
	\begin{align*}
		\mathcal{J}^2=&	-\frac{1}{N} 	\int_{\mathbb{R}^3} \left(\int_{0}^{t} b(N\tau) d\tau\right) \nabla (\partial_i u^j_\epsilon)\cdot \nabla u^i\Delta u^j_\epsilon dx\vert_{t=0}^{t=T}.
	\end{align*}
	According to Gagliardo-Nirenberg interpolation inequality,
	\begin{align*}
		\|u\|_{L^\infty(\mathbb{R}^3)} \lesssim \|u\|_{L^2(\mathbb{R}^3)}^{\frac{1}{4}} \| u\|_{\dot{H}^2(\mathbb{R}^3)}^{\frac{3}{4}},
	\end{align*}
	combining with H\"{o}der inequality and Lemma \ref{epsilon}, it is obvious that
	\begin{align*}
		\int_{\mathbb{R}^3} |\nabla^2 u_{0,\epsilon}||\nabla u_0||\Delta u_{0,\epsilon}| dx
		&\leq \|\nabla^2 u_{0,\epsilon}\|_{L^\infty(\mathbb{R}^3)} \|\nabla^2 u_{0,\epsilon}\|_{L^2(\mathbb{R}^3)} \|\nabla u_0\|_{L^2(\mathbb{R}^3)} \\
		&\leq \epsilon^{-2} \|u_0\|_{L^\infty(\mathbb{R}^3)} \|u_0\|_{\dot{H}^2(\mathbb{R}^3)} \|u_0\|_{\dot{H}^1(\mathbb{R}^3)}\\
		&\lesssim \epsilon^{-2} \|u_0\|_{L^2(\mathbb{R}^3)}^{\frac{1}{4}} \|u_0\|_{\dot{H}^2(\mathbb{R}^3)}^{\frac{7}{4}} \|u_0\|_{\dot{H}^1(\mathbb{R}^3)}\\
		&\lesssim\epsilon^{-2} \|u_0\|_{H^2(\mathbb{R}^3)}^3,
	\end{align*}    
	and applying \eqref{nabla u2}, we infer
	\begin{align*}
		\int_{\mathbb{R}^3} |\nabla^2 u_\epsilon(T)||\Delta u_\epsilon(T)| |\nabla u(T)| dx
		&\leq \|\nabla^2 u_\epsilon(T)\|_{L^2(\mathbb{R}^3)} \|\Delta u_\epsilon(T)\|_{L^\infty(\mathbb{R}^3)} \|\nabla u(T)\|_{L^2(\mathbb{R}^3)}\\
		&\leq \epsilon^{-2} \|u(T)\|_{\dot{H}^2(\mathbb{R}^3)} \|u(T)\|_{L^\infty(\mathbb{R}^3)} \|u(T)\|_{\dot{H}^1(\mathbb{R}^3)}\\
		&\lesssim \epsilon^{-2} \|u(T)\|_{L^2(\mathbb{R}^3)}^{\frac{3}{4}} \|u(T)\|_{\dot{H}^2(\mathbb{R}^3)}^{\frac{9}{4}}\\
		&\lesssim \epsilon^{-2} \|u_0\|_{L^2(\mathbb{R}^3)}^{\frac{3}{4}} X_T^{\frac{9}{8}}.
	\end{align*}
	Thus, we get
	\begin{align*}
		\mathcal{J}^2 \lesssim \frac{M}{N\epsilon^2} \left(\|u_0\|_{H^2(\mathbb{R}^3)}^3+\|u_0\|_{L^2(\mathbb{R}^3)}^{\frac{3}{4}} X_T^{\frac{9}{8}}\right).
	\end{align*}
	Therefore, we arrive at
	\begin{align*}
		\|\Delta u(T)\|_{L^2(\mathbb{R}^3)}^2	+\int_{0}^{T}\|\Delta u(t)\|_{\dot{H}^1(\mathbb{R}^3)}^2
		\lesssim \|u_0\|_{\dot{H}^2(\mathbb{R}^3)}^2+ \epsilon M \|u_0\|_{L^2(\mathbb{R}^3)}^{\frac{1}{4}} X_T^{\frac{11}{8}}+\frac{M}{N\epsilon^2}\left(\|u_0\|_{ L^2(\mathbb{R}^3)}^{\frac{3}{4}} 
		X_T^{\frac{9}{8}}+\|u_0\|_{ L^2(\mathbb{R}^3)}^\frac{3}{2} 
		X_T^{\frac{5}{4}}+\|u_0\|_{H^2(\mathbb{R}^3)}^3\right).
	\end{align*}	
	As a result, it suffices to gain the expression of $X_T$:
	\begin{align*}
		X_T
		&\leq \underset{0\leq t \leq T}{\sup} \|\Delta u(T)\|_{L^2(\mathbb{R}^3)}^2	+\int_{0}^{T}\|\Delta u(t)\|_{\dot{H}^1(\mathbb{R}^3)}^2\\
		&\lesssim \|u_0\|_{\dot{H}^2(\mathbb{R}^3)}^2+ \epsilon M \|u_0\|_{L^2(\mathbb{R}^3)}^{\frac{1}{4}} X_T^{\frac{11}{8}}+\frac{M}{N\epsilon^2}\left(\|u_0\|_{ L^2(\mathbb{R}^3)}^{\frac{3}{4}} 
		X_T^{\frac{9}{8}}+\|u_0\|_{ L^2(\mathbb{R}^3)}^\frac{3}{2} 
		X_T^{\frac{5}{4}}+\|u_0\|_{H^2(\mathbb{R}^3)}^3\right).
	\end{align*}
	Taking $\epsilon=\left(\|u_0\|_{ L^2(\mathbb{R}^3)}^{\frac{3}{4}} 
	X_T^{\frac{9}{8}}+\|u_0\|_{ L^2(\mathbb{R}^3)}^\frac{3}{2} 
	X_T^{\frac{5}{4}}+\|u_0\|_{H^2(\mathbb{R}^3)}^3\right)^{\frac{1}{3}}\left(N\|u_0\|_{L^2(\mathbb{R}^3)}^{\frac{1}{4}} X_T^{\frac{11}{8}}\right)^{-\frac{1}{3}}$, we gain that there exists a constant $C>1$ such that
	\begin{align*}
		X_T\leq C \left(\|u_0\|_{\dot{H}^2(\mathbb{R}^3)}^2+\frac{M}{\sqrt[3]{N}}\left(\|u_0\|_{ L^2(\mathbb{R}^3)}^{\frac{3}{4}} 
		X_T^{\frac{9}{8}}+\|u_0\|_{ L^2(\mathbb{R}^3)}^\frac{3}{2} 
		X_T^{\frac{5}{4}}+\|u_0\|_{H^2(\mathbb{R}^3)}^3\right)^{\frac{1}{3}}\|u_0\|_{L^2(\mathbb{R}^3)}^{\frac{1}{6}} X_T^{\frac{11}{12}}\right).
	\end{align*}
\end{proof}
Now, we are at the position to give a proof of Theorem \ref{thm1.1}.
\begin{proof}[Proof of Theorem \ref{thm1.1}]
	Denote a set
	\begin{align}\label{set}
		\mathcal{T}:=\sup\{T:X_t\leq2C\|u_0\|_{H^2(\mathbb{R}^3)}^2, \quad\forall t\in[0,T]\}.
	\end{align}
	It is obvious that $\mathcal{T}>0$.  Now we claim that $\mathcal{T}=\infty$, otherwise
	taking $N_0=500M^3C^8\left(\|u_0\|_{H^2(\mathbb{R}^3)}+1\right)^4
	$ such that $N\geq N_0$, we have
	\begin{align*}
		X_T\leq \frac{3C}{2}  \|u_0\|_{H^2(\mathbb{R}^3)}^2,
	\end{align*}
	which yields a contradiction. Thus, we can derive that the set $\mathcal{T}=+\infty$.\\
	Consequently, for any $T>0$,
	\begin{align*}
		\|u\|_{L_T^\infty \dot{H}^2(\mathbb{R}^3)}^2 + \|u\|_{L_T^\infty \dot{H}^3(\mathbb{R}^3)}^2
		\leq 2C\|u_0\|_{H^2(\mathbb{R}^3)}^2,
	\end{align*}
	which means the equation \eqref{NS} admits global smooth solution.
\end{proof}

\section{Global large solution to  2D supercritical Surface Quasi-Geostrophic equations with time oscillation}
\begin{proposition}\label{proposition2}
	Let $\theta$ be a solution of \eqref{SQG} and denote
	\begin{equation}
		\widetilde{X}_T=\|\theta\|_{L^\infty_T\dot{H}^2(\mathbb{R}^2)}^2+\|\theta\|_{L^2_T\dot{H}^{2+\frac{\alpha}{2}}(\mathbb{R}^3)}^2,
	\end{equation}
	then we have 
	\begin{align}
		\widetilde{X}_T \lesssim
		&\left(
		\|\theta_0\|_{\dot{H}^2(\mathbb{R}^2)}^2
		+ \frac{1}{N^{\frac{\alpha}{4+\alpha}}}
		\left(
		\|\theta_0\|_{L^2(\mathbb{R}^2)} \widetilde{X}_T
		+ \|\theta_0\|_{L^2 (\mathbb{R}^2)}^{1+\frac{\alpha}{2}}
		\widetilde{X}_T^{\frac{6-\alpha}{4}}
		+ \|\theta_0\|_{L^2 (\mathbb{R}^2)}^{1+\frac{\alpha}{4}}
		\widetilde{X}_T^{\frac{8-\alpha}{8}}
		+ \|\theta_0\|_{H^2(\mathbb{R}^2)}
		\right)^{\frac{\alpha}{4+\alpha}}\cdot\right.\nonumber\\
		&\left.\quad \cdot\|\theta_0\|_{L^2(\mathbb{R})}^{\frac{\alpha}{4+\alpha}}
		\widetilde{X}_T^{\frac{12-\alpha}{8+2\alpha}}	
		\right).
	\end{align}
\end{proposition}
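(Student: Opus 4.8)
The plan is to run the scheme of Proposition~\ref{proposition1} essentially line by line, the one structural change being that the borderline mollifier bound $|f-f_\epsilon|\lesssim\epsilon\sup|\nabla f|$ used there must be replaced by the fractional one of Lemma~\ref{modifier} with the exponent $s=\tfrac{\alpha}{2}$ dictated by the supercritical dissipation. First I would test \eqref{SQG} against $\Delta^{2}\theta$ and integrate in $x$. Writing the drift velocity $v:=\nabla^{\perp}(-\Delta)^{-1/2}\theta$, so that $\nabla\cdot v=0$ (it is a perpendicular gradient) and $v$ is obtained from $\theta$ by order-zero Fourier multipliers, hence $\|v\|_{\dot H^{s}}\lesssim\|\theta\|_{\dot H^{s}}$ for every $s$, one gets
\[
\tfrac{1}{2}\partial_{t}\|\theta\|_{\dot H^{2}(\mathbb{R}^{2})}^{2}+\|\theta\|_{\dot H^{2+\frac{\alpha}{2}}(\mathbb{R}^{2})}^{2}=-b(Nt)\int_{\mathbb{R}^{2}}\Delta(v\cdot\nabla\theta)\,\Delta\theta\,dx ,
\]
since $\int(-\Delta)^{\alpha/2}\theta\cdot\Delta^{2}\theta\,dx=\|\theta\|_{\dot H^{2+\alpha/2}}^{2}$. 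Expanding $\Delta(v\cdot\nabla\theta)$ by the Leibniz rule, the top-order piece $\int(v\cdot\nabla\Delta\theta)\Delta\theta\,dx=-\tfrac12\int(\nabla\cdot v)|\Delta\theta|^{2}\,dx$ vanishes, exactly as the dangerous term was cancelled in Proposition~\ref{proposition1}; what remains, after suitable integrations by parts to balance derivatives, is a finite sum of trilinear terms, each carrying one derivative on $v$ (equivalently on $\theta$, up to bounded multipliers) and two on $\theta$, of the shapes handled by Lemma~\ref{SQGinterpolation}. Integrating in time produces an identity $\|\Delta\theta(T)\|_{L^{2}}^{2}+2\int_{0}^{T}\|\theta\|_{\dot H^{2+\alpha/2}}^{2}\,dt=\|\Delta\theta_{0}\|_{L^{2}}^{2}+(\text{time integral of those trilinear terms})$.

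Next I would set $\theta_{\epsilon}:=\theta\star\rho_{\epsilon}$ and split the time integral into $\widetilde{\mathcal I}+\widetilde{\mathcal J}$ as in the proof of Proposition~\ref{proposition1}, with $\widetilde{\mathcal I}$ collecting all differences ``$(\cdot)-(\cdot)_{\epsilon}$'' and $\widetilde{\mathcal J}$ the fully mollified remainder. For $\widetilde{\mathcal I}$, Lemma~\ref{modifier} with $s=\tfrac{\alpha}{2}$ gives $\|\Delta\theta-\Delta\theta_{\epsilon}\|_{L^{2}}\le\epsilon^{\alpha/2}\|\theta\|_{\dot H^{2+\alpha/2}}$, so after H\"older each term is bounded by $\epsilon^{\alpha/2}M$ times an integral of the type $\int_{0}^{T}\|\theta\|_{\dot H^{2+\alpha/2}}\|\nabla\theta\|_{L^{\infty}}\|\theta\|_{\dot H^{2}}\,dt$ or the variants with two factors $\|\nabla\theta\|_{L^{\infty}}$; these are precisely the integrals controlled by \eqref{estimate 1}, \eqref{estimate 3}, \eqref{estimate 4} of Lemma~\ref{SQGinterpolation}, after converting $v$-norms to $\theta$-norms. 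The $\dot H^{\alpha/2}$-factors appearing on the right of Lemma~\ref{SQGinterpolation} are then absorbed by the $L^{2}$ energy equality $\|\theta\|_{L^{\infty}_{T}L^{2}}^{2}+2\int_{0}^{T}\|\theta\|_{\dot H^{\alpha/2}}^{2}\,dt=\|\theta_{0}\|_{L^{2}}^{2}$ (the nonlinearity is in divergence form, $v\cdot\nabla\theta=\nabla\cdot(v\theta)$, hence drops out) together with $\|\theta\|_{\dot H^{\alpha/2}}\lesssim\|\theta\|_{L^{2}}^{1-\alpha/4}\|\theta\|_{\dot H^{2}}^{\alpha/4}$, turning everything into powers of $\|\theta_{0}\|_{L^{2}}$ and $\widetilde X_{T}$; the outcome is $\widetilde{\mathcal I}\lesssim\epsilon^{\alpha/2}M\,\|\theta_{0}\|_{L^{2}}^{\alpha/4}\widetilde X_{T}^{(12-\alpha)/8}$.

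For $\widetilde{\mathcal J}$ I would integrate by parts in time, replacing $b(Nt)$ by $\tfrac1N\partial_{t}\!\int_{0}^{t}b(N\tau)\,d\tau$ and using the oscillation bound $\bigl|\int_{t_{1}}^{t_{2}}b\bigr|\le M$ to gain the factor $\tfrac1N$, exactly as for $\mathcal J$ in Proposition~\ref{proposition1}. The time derivative then hits the mollified factors; substituting $\partial_{t}\theta=-(-\Delta)^{\alpha/2}\theta-b(Nt)\,\nabla\cdot(v\theta)$ replaces $\partial_{t}\theta_{\epsilon}$ by mollified versions of $(-\Delta)^{\alpha/2}\theta$ and of the nonlinearity, which raise the order by $\alpha\le 2$, and Lemma~\ref{epsilon} converts these extra derivatives into at most $\epsilon^{-2}$ (just as $\|u_{\epsilon}\|_{\dot H^{4}}\le\epsilon^{-2}\|u\|_{\dot H^{2}}$ was used there). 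The resulting integrals are again of the form handled by \eqref{estimate 2}--\eqref{estimate 4} and the $L^{2}$ energy, and the boundary contribution at $t=0,T$ is estimated by Gagliardo--Nirenberg, Lemma~\ref{epsilon}, and $\|\theta_{0}\|_{\dot H^{2}}\le\|\theta_{0}\|_{H^{2}}$, exactly as for $\mathcal J^{2}$; this yields $\widetilde{\mathcal J}\lesssim\tfrac{M}{N\epsilon^{2}}\bigl(\|\theta_{0}\|_{L^{2}}\widetilde X_{T}+\|\theta_{0}\|_{L^{2}}^{1+\alpha/2}\widetilde X_{T}^{(6-\alpha)/4}+\|\theta_{0}\|_{L^{2}}^{1+\alpha/4}\widetilde X_{T}^{(8-\alpha)/8}+\|\theta_{0}\|_{H^{2}}\bigr)$. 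Collecting all terms gives $\widetilde X_{T}\lesssim\|\theta_{0}\|_{\dot H^{2}}^{2}+\epsilon^{\alpha/2}MA+\tfrac{M}{N\epsilon^{2}}B$ with $A,B$ as above, and choosing $\epsilon$ to balance the last two terms, i.e. $\epsilon\sim(B/(NA))^{2/(4+\alpha)}$, produces the factor $N^{-\alpha/(4+\alpha)}$ and the claimed inequality (with $M$ absorbed into $\lesssim$).

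The step I expect to be the main obstacle is the derivative bookkeeping. Because the dissipation only controls $\dot H^{2+\alpha/2}$ and not $\dot H^{3}$ as for Navier--Stokes, every trilinear term coming from $\Delta(v\cdot\nabla\theta)$ must be redistributed by integration by parts so that it falls under one of the four specific interpolation estimates of Lemma~\ref{SQGinterpolation} --- in particular the genuinely top-order term has to be killed by $\nabla\cdot v=0$ and nothing weaker will do --- and in $\widetilde{\mathcal J}$ one has to check that, after inserting $\partial_{t}\theta$ (which contains the full dissipation $(-\Delta)^{\alpha/2}\theta$) and applying Lemma~\ref{epsilon}, the accumulated negative power of $\epsilon$ never exceeds $\epsilon^{-2}$; otherwise the optimization over $\epsilon$, and hence the smallness gained from large $N$, would fail.
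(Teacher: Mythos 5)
Your proposal follows essentially the same route as the paper's proof: the same $\Delta^2\theta$ energy estimate with cancellation of the top-order term via $\nabla\cdot\nabla^{\perp}=0$, the same mollifier splitting into $\mathbb{I}+\mathbb{J}$ with Lemma~\ref{modifier} applied at the fractional exponent $s=\tfrac{\alpha}{2}$, the same time integration by parts plus substitution of the equation and Lemma~\ref{epsilon} for the $\tfrac{1}{N\epsilon^{2}}$ part, and the identical optimization $\epsilon\sim\bigl(B/(NA)\bigr)^{2/(4+\alpha)}$ yielding $N^{-\alpha/(4+\alpha)}$. The intermediate bounds and exponents you record match those in the paper, so the sketch is correct.
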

\begin{proof}
	Multiply $\Delta^2 \theta$ as test function on \eqref{SQG} and integral over $dx$, we get 
	\begin{align}\label{dx2}
		\frac{1}{2}\partial_t \|\Delta\theta(t)\|_{L^2(\mathbb{R}^2)}^2+\|\Delta^{1+\frac{\alpha}{4}}\theta(t)\|_{L^2(\mathbb{R}^2)}^2=-b(Nt)\int_{\mathbb{R}^2} \nabla^{\perp} (-\Delta)^{-\frac{1}{2}} \theta\cdot\nabla \theta \Delta^2 \theta dx.
	\end{align}
	For the term on right hand-side, integral by parts, we write
	\begin{align*}
		&-b(Nt)\int_{\mathbb{R}^2} \nabla^{\perp} (-\Delta)^{-\frac{1}{2}} \theta\cdot\nabla \theta \Delta^2 \theta dx\\
		&\qquad=-b(Nt) \int_{\mathbb{R}^2} \Delta
		\left(
		\nabla^{\perp} (-\Delta)^{-\frac{1}{2}} \theta\cdot\nabla \theta 
		\right) \Delta \theta dx\\
		&\qquad=-b(Nt) \int_{\mathbb{R}^2} \nabla^\perp (-\Delta)^{-\frac{1}{2}} \theta \cdot \nabla (\Delta \theta) \Delta \theta dx
		+b(Nt) \int_{\mathbb{R}^2} \nabla^\perp (-\Delta)^{-\frac{1}{2}} \theta \cdot \nabla \theta \Delta \theta dx\\
		&\qquad\quad+b(Nt) \int_{\mathbb{R}^2} (-\Delta)^{-\frac{1}{2}} \nabla (\partial_2 \theta) \cdot \nabla(\partial_1 \theta) \Delta \theta dx
		- b(Nt) \int_{\mathbb{R}^2} (-\Delta)^{-\frac{1}{2}} \nabla (\partial_1 \theta) \cdot \nabla(\partial_2 \theta) \Delta \theta dx.
	\end{align*} 
	For the first term, considering integral by parts, we compute as follows:
	\begin{align*}
		&-b(Nt) \int_{\mathbb{R}^2} \nabla^\perp (-\Delta)^{-\frac{1}{2}} \theta \cdot \nabla (\Delta \theta) \Delta \theta dx\\
		&\quad=b(Nt) \int_{\mathbb{R}^2} \nabla \cdot 
		\left(
		\left(\nabla^\perp (-\Delta)^{-\frac{1}{2}} \theta \right) \Delta \theta 
		\right) \Delta \theta dx\\
		&\quad=b(Nt) \int_{\mathbb{R}^2}  \Delta \theta \nabla \cdot 
		\left(
		\nabla^\perp (-\Delta)^{-\frac{1}{2}} \theta
		\right) \Delta \theta dx
		+b(Nt) \int_{\mathbb{R}^2} \nabla^\perp (-\Delta)^{-\frac{1}{2}} \theta \cdot \nabla(\Delta \theta) \Delta \theta.
	\end{align*}	
	Since $\nabla \cdot \nabla^\perp=0$, we obtain that the first term equals $0$, i.e.
	\begin{align*}
		-b(Nt) \int_{\mathbb{R}^2} \nabla^\perp (-\Delta)^{-\frac{1}{2}} \theta \cdot \nabla (\Delta \theta) \Delta \theta dx
		=b(Nt) \int_{\mathbb{R}^2} \nabla^\perp (-\Delta)^{-\frac{1}{2}} \theta \cdot \nabla (\Delta \theta) \Delta \theta dx=0.
	\end{align*}
	Now we take integral over time about the equality \eqref{dx2}, we have
	\begin{align*}
		&\frac{1}{2}\|\Delta\theta(T)\|_{L^2(\mathbb{R}^2)}^2	-\frac{1}{2}\|\Delta\theta_0\|_{L^2(\mathbb{R}^2)}^2+\int_{0}^{T}\|\Delta^{1+\frac{\alpha}{4}}\theta(t)\|_{L^2(\mathbb{R}^2)}^2\\
		&\quad=\int_{0}^{T} \int_{\mathbb{R}^2} b(Nt) \nabla^\perp (-\Delta)^{\frac{1}{2}} \theta \cdot \nabla \theta \Delta \theta dx dt
		+ \int_{0}^{T} \int_{\mathbb{R}^2} b(Nt) (-\Delta)^{-\frac{1}{2}} \nabla(\partial_2 \theta) \cdot \nabla(\partial_1 \theta) \Delta \theta dx dt\\
		&\quad\quad- \int_{0}^{T} \int_{\mathbb{R}^2} b(Nt) (-\Delta)^{-\frac{1}{2}} \nabla(\partial_1 \theta) \cdot \nabla(\partial_2 \theta) \Delta \theta dx dt,
	\end{align*}	
	Let $\theta_\epsilon(t,x)=\left(\theta(t)\star \rho_\epsilon\right)(x)$, where $\rho_\epsilon(x)=\epsilon^{-2}\rho(x/\epsilon)$ is a modifier. Then we divide the right hand-side into two parts,
	\begin{align*}
		RHS=\mathbb{I}+\mathbb{J},
	\end{align*}
	where
	\begin{align*}
		\mathbb{I}=&- \int_{0}^{T} \int_{\mathbb{R}^2} b(Nt)
		\left(
		\nabla^\perp (-\Delta)^{\frac{1}{2}} \theta \cdot \nabla \theta \Delta \theta 
		- \nabla^\perp (-\Delta)^{\frac{1}{2}} \theta_\epsilon \cdot \nabla \theta \Delta \theta _\epsilon
		\right) dx dt\\
		&\quad+\int_{0}^{T} \int_{\mathbb{R}^2} b(Nt)
		\left(
		(-\Delta)^{-\frac{1}{2}} \nabla(\partial_2 \theta) \cdot \nabla(\partial_1 \theta) \Delta \theta
		- (-\Delta)^{-\frac{1}{2}} \nabla(\partial_2 \theta) \cdot \nabla(\partial_1 \theta_\epsilon) \Delta \theta_\epsilon
		\right) dx dt\\
		&\quad-\int_{0}^{T} \int_{\mathbb{R}^2} b(Nt)
		\left(
		(-\Delta)^{-\frac{1}{2}} \nabla(\partial_1 \theta) \cdot \nabla(\partial_2 \theta) \Delta \theta
		- (-\Delta)^{-\frac{1}{2}} \nabla(\partial_1 \theta) \cdot \nabla(\partial_2 \theta_\epsilon) \Delta \theta_\epsilon
		\right)dx dt,\\
		\mathbb{J}=&-\int_{0}^{T} \int_{\mathbb{R}^2} b(Nt) \nabla^\perp (-\Delta)^{\frac{1}{2}} \theta_\epsilon \cdot \nabla \theta \Delta \theta_\epsilon dx dt
		+\int_{0}^{T} \int_{\mathbb{R}^2} b(Nt) (-\Delta)^{-\frac{1}{2}} \nabla(\partial_2 \theta) \cdot \nabla(\partial_1 \theta_\epsilon) \Delta \theta_\epsilon dx dt\\
		&\quad-\int_{0}^{T} \int_{\mathbb{R}^2} b(Nt) (-\Delta)^{-\frac{1}{2}} \nabla(\partial_1 \theta) \cdot \nabla(\partial_2 \theta_\epsilon) \Delta \theta_\epsilon dx dt.
	\end{align*}
	Without loss of generality, we only show the estimate of the first term of $\mathbb{I}$ and $\mathbb{J}$, the estimates of the other terms are similar, we omit the details.\\
	For the first term of $\mathbb{I}$,  applying H\"{o}lder inequality, Lemma \ref{epsilon} and \ref{modifier} we deduce that
	\begin{align*}
		&-\int_{0}^{T} \int_{\mathbb{R}^2} b(Nt)
		\left(
		\nabla^\perp (-\Delta)^{\frac{1}{2}} \theta \cdot \nabla \theta \Delta \theta 
		- \nabla^\perp (-\Delta)^{\frac{1}{2}} \theta_\epsilon \cdot \nabla \theta \Delta \theta _\epsilon
		\right) dx dt\\
		&\quad=-\int_{0}^{T} \int_{\mathbb{R}^2} b(Nt)
		\left(
		\nabla^\perp (-\Delta)^{\frac{1}{2}} \theta \cdot \nabla \theta \Delta \theta 
		- \nabla^\perp (-\Delta)^{\frac{1}{2}} \theta_\epsilon \cdot \nabla \theta \Delta \theta
		\right) dx dt\\
		&\qquad\quad- \int_{0}^{T} \int_{\mathbb{R}^2} b(Nt)
		\left(
		\nabla^\perp (-\Delta)^{\frac{1}{2}} \theta_\epsilon \cdot \nabla \theta \Delta \theta 
		- \nabla^\perp (-\Delta)^{\frac{1}{2}} \theta_\epsilon \cdot \nabla \theta \Delta \theta _\epsilon
		\right) dx dt\\
		&\quad\leq M \left(\int_{0}^{T}  
		\|\nabla^\perp (-\Delta)^{\frac{1}{2}} \left(\theta -  \theta_\epsilon\right) \|_{L^2(\mathbb{R}^2)}
		\|\nabla \theta\|_{L^\infty(\mathbb{R}^2)} \|\Delta \theta\|_{L^2(\mathbb{R}^2)}dt\right.\\
		&\left.\qquad\qquad+ \int_{0}^{T} 
		\|\nabla^\perp (-\Delta)^{\frac{1}{2}} \theta_\epsilon \|_{L^2(\mathbb{R}^2)} 
		\|\nabla \theta\|_{L^\infty(\mathbb{R}^2)} 
		\|\Delta \theta-\Delta \theta_\epsilon\|_{L^2(\mathbb{R}^2)}dt\right)\\
		&\quad\lesssim\epsilon^{\frac{\alpha}{2}} M \int_{0}^{T} \|\theta\|_{\dot{H}^{2+\frac{\alpha}{2}}(\mathbb{R}^2)}
		\|\nabla \theta\|_{L^\infty(\mathbb{R}^2)} \|\theta\|_{\dot{H}^2(\mathbb{R}^2)} dt\\
		&\quad\lesssim  \epsilon^{\frac{\alpha}{2}} M \|\theta_0\|_{L^2(\mathbb{R}^2)} ^{\frac{\alpha}{4}}
		{\widetilde{X}_T}^{\frac{12-\alpha}{8}},
	\end{align*}
	where in the last line we use inequality \eqref{estimate 1} and energy inequality
	\begin{align*}
		\|\theta(t,\cdot)\|_{L_T L^2(\mathbb{R}^2)}^2
		+2\int_{0}^T \|\theta(s,\cdot)\|_{\dot{H}^{\frac{\alpha}{2}}(\mathbb{R}^2)}^2 ds
		\leq \|\theta_0\|_{L^2(\mathbb{R}^2)}^2.
	\end{align*}	
	Using similar estimate, we obtain that
	\begin{align*}
		\mathbb{I}\lesssim  \epsilon^{\frac{\alpha}{2}} M \|\theta_0\|_{L^2(\mathbb{R}^2)} ^{\frac{\alpha}{4}}
		{\widetilde{X}_T}^{\frac{12-\alpha}{8}}.
	\end{align*}	
	For the first term of $\mathbb{J}$, we integral by parts over $dt$ to get
	\begin{align*}
		-\int_{0}^{T} \int_{\mathbb{R}^2} b(Nt) \nabla^\perp (-\Delta)^{\frac{1}{2}} \theta_\epsilon \cdot \nabla \theta \Delta \theta_\epsilon dx dt
		&=\frac{1}{N} 	\int_{0}^{T} \int_{\mathbb{R}^2} \left(\int_{0}^{t} b(N\tau)d\tau\right) 
		\partial_t [\nabla^\perp (-\Delta)^{\frac{1}{2}} \theta_\epsilon \cdot \nabla \theta \Delta \theta_\epsilon] dx dt\\
		&\qquad- \frac{1}{N}  \int_{\mathbb{R}^2} \left(\int_{0}^{t} b(N\tau)d\tau\right) \nabla^\perp (-\Delta)^{\frac{1}{2}} \theta_\epsilon \cdot \nabla \theta \Delta \theta_\epsilon dx\big\vert_{t=0}^{t=T}\\
		&=:\mathbb{J}^1+\mathbb{J}^2.
	\end{align*}
	As for the term $\mathbb{J}^1$, we write
	\begin{align*}
		\mathbb{J}^1&=\frac{1}{N} 	\int_{0}^{T} \int_{\mathbb{R}^2} \left(\int_{0}^{t} b(N\tau)d\tau\right) 
		\nabla^\perp (-\Delta)^{\frac{1}{2}} \left( \partial_t \theta_\epsilon\right) \cdot \nabla \theta \Delta \theta_\epsilon dx dt\\
		&\quad+\frac{1}{N} 	\int_{0}^{T} \int_{\mathbb{R}^2} \left(\int_{0}^{t} b(N\tau)d\tau\right) 
		\nabla^\perp (-\Delta)^{\frac{1}{2}}  \theta_\epsilon \cdot \nabla \left(\partial_t \theta\right) \Delta \theta_\epsilon dx dt\\
		&\quad+\frac{1}{N} 	\int_{0}^{T} \int_{\mathbb{R}^2} \left(\int_{0}^{t} b(N\tau)d\tau\right) 
		\nabla^\perp (-\Delta)^{\frac{1}{2}}  \theta_\epsilon \cdot \nabla  \theta \Delta \left(\partial_t \theta_\epsilon\right) dx dt\\
		&=:\mathbb{J}_1^1+\mathbb{J}^1_2+\mathbb{J}^1_3.
	\end{align*}
	To treat the term $\mathbb{J}_1^1$, we make use of the fact that
	$\partial_t \theta=-(-\Delta)^{\frac{\alpha}{2}} \theta-b(Nt)\nabla^\perp (-\Delta)^{-\frac{1}{2}} \theta \cdot \nabla\theta$,
	thus we have
	\begin{align*}
		\mathbb{J}_1^1&=-\frac{1}{N} 	\int_{0}^{T} \int_{\mathbb{R}^2} \left(\int_{0}^{t} b(N\tau)d\tau\right) 
		\left(\nabla^\perp (-\Delta)^{\frac{1}{2}} (-\Delta)^{\frac{\alpha}{2}} \theta_\epsilon\right)
		\cdot \nabla \theta \Delta \theta_\epsilon dx dt\\
		&\quad\quad- \frac{1}{N} \int_{0}^{T} \int_{\mathbb{R}^2} \left(\int_{0}^{t} b(N\tau)d\tau\right) b(Nt) \nabla^\perp (-\Delta)^{\frac{1}{2}}
		\left(
		\nabla^\perp (-\Delta)^{-\frac{1}{2}} \theta_\epsilon \cdot \nabla \theta_\epsilon
		\right)
		\cdot \nabla \theta \Delta \theta_\epsilon dx dt\\
		&=:\mathbb{J}_{1,1}^1+\mathbb{J}_{1,2}^1.
	\end{align*}	
	According to H\"{o}lder inequality, Lemma \ref{epsilon} and \eqref{estimate 2}, we yield
	\begin{align*}
		\mathbb{J}_{1,1}^1 &\leq \frac{M}{N} \int_{0}^T  
		\|\nabla^\perp (-\Delta)^{\frac{1+\alpha}{2}} \theta_\epsilon\|_{L^2(\mathbb{R}^2)}
		\|\nabla \theta\|_{L^\infty (\mathbb{R}^2)}
		\|\Delta \theta_\epsilon\|_{L^2(\mathbb{R}^2)}dt\\
		&\leq \frac{M}{N\epsilon^2} \int_{0}^T 
		\|\theta\|_{\dot{H}^{1+\alpha}(\mathbb{R}^2)} 
		\|\nabla \theta\|_{L^\infty (\mathbb{R}^2)}
		\|\theta\|_{\dot{H}^1(\mathbb{R}^2)} dt\\
		&\lesssim \frac{M}{N\epsilon^2} \|\theta_0\|_{L^2(\mathbb{R}^2)} \widetilde{X}_T,
	\end{align*}	
	with the application of \eqref{estimate 3}, we have
	\begin{align*}
		\mathbb{J}_{1,2}^1&\leq \frac{M^2}{N} \int_0^T 
		\|\nabla^\perp (-\Delta)^{-\frac{1}{2}}
		\left(
		\nabla^\perp (-\Delta)^{-\frac{1}{2}} \theta_\epsilon \cdot \nabla \theta_\epsilon
		\right)\|_{L^2(\mathbb{R}^2)}
		\|\nabla \theta\|_{L^\infty(\mathbb{R}^2)}
		\|\Delta \theta_\epsilon\|_{L^2(\mathbb{R}^2)} dx dt\\
		&\leq \frac{M^2}{N} \int_{0}^T 
		\left(
		\|\nabla^2 \theta_\epsilon\|_{L^\infty(\mathbb{R}^2)}
		\|\nabla \theta_\epsilon\|_{L^2(\mathbb{R}^2)}
		+ \|\nabla^2 \theta_\epsilon\|_{L^2(\mathbb{R}^2)}
		\|\nabla \theta_\epsilon\|_{L^\infty(\mathbb{R}^2)}
		\right)
		\|\nabla \theta\|_{L^\infty(\mathbb{R}^2)}
		\|\Delta \theta_\epsilon\|_{L^2(\mathbb{R}^2)} dt\\
		&\leq \frac{M^2}{N\epsilon^2} \int_{0}^T
		\|\nabla \theta\|_{L^\infty(\mathbb{R}^2)}^2
		\|\nabla \theta\|_{L^2(\mathbb{R}^2)}^2 dt\\
		&\lesssim \frac{M^2}{N\epsilon^2} \|\theta_0\|_{L^2(\mathbb{R}^2)}^{1+\frac{\alpha}{2}}
		{\widetilde{X}_T}^{\frac{6-\alpha}{4}}.
	\end{align*}	
	Without loss of generality, applying the same method, it is easy to check 
	\begin{align*}
		&\mathbb{J}^1_2 \lesssim 
		\frac{M}{N\epsilon^2} \|\theta_0\|_{L^2(\mathbb{R}^2)}
		\widetilde{X}_T
		+\frac{M^2}{N\epsilon^2} \|\theta_0\|_{L^2(\mathbb{R}^2)}^{1+\frac{\alpha}{2}}
		{\widetilde{X}_T}^{\frac{6-\alpha}{4}},\\
		&\mathbb{J}^1_3 \lesssim 
		\frac{M}{N\epsilon^2} \|\theta_0\|_{L^2(\mathbb{R}^2)}
		\widetilde{X}_T
		+\frac{M^2}{N\epsilon^2} \|\theta_0\|_{L^2(\mathbb{R}^2)}^{1+\frac{\alpha}{2}}
		{\widetilde{X}_T}^{\frac{6-\alpha}{4}},
	\end{align*}	
	we omit the details here.\\
	Hence, we have
	\begin{align*}
		\mathbb{J}^1 \lesssim 
		\frac{M}{N\epsilon^2} \|\theta_0\|_{L^2(\mathbb{R}^2)}
		\widetilde{X}_T
		+\frac{M^2}{N\epsilon^2} \|\theta_0\|_{L^2(\mathbb{R}^2)}^{1+\frac{\alpha}{2}}
		{\widetilde{X}_T}^{\frac{6-\alpha}{4}}.
	\end{align*}	
	As for the term $\mathbb{J}^2$,
	\begin{align*}
		\mathbb{J}^2=
		- \frac{1}{N}  \int_{\mathbb{R}^2} \left(\int_{0}^{t} b(N\tau)d\tau\right) \nabla^\perp (-\Delta)^{\frac{1}{2}} \theta_\epsilon \cdot \nabla \theta \Delta \theta_\epsilon dx\big\vert_{t=0}^{t=T}.
	\end{align*}	
	According to Gagliardo-Nirenberg interpolation,
	\begin{align*}
		\|\theta_0\|_{L^\infty (\mathbb{R}^2) }
		\lesssim \|\theta_0\|_{\dot{H}^2(\mathbb{R}^2)}^{\frac{1}{2}}
		\|\theta_0\|_{L^2(\mathbb{R}^2)}^{\frac{1}{2}},
	\end{align*}	
	combining with H\"{o}lder inequality and Lemma $\ref{epsilon}$, it is obvious that
	\begin{align*}
		\int_{\mathbb{R}^2} 
		|\nabla^\perp (-\Delta)^{\frac{1}{2}} \theta_{0,\epsilon}|
		|\nabla \theta_0|
		|\Delta \theta_{0,\epsilon}| dx
		&\leq	\|\nabla^\perp (-\Delta)^{\frac{1}{2}} \theta_{0,\epsilon}\|_{L^\infty (\mathbb{R}^2)}
		\|\nabla \theta_0\|_{L^2(\mathbb{R}^2)}
		\|\Delta \theta_{0,\epsilon}\|_{L^2(\mathbb{R}^2)}\\
		&\leq \epsilon^{-2}
		\|\theta_0\|_{L^\infty (\mathbb{R}^2)}
		\|\theta_0\|_{\dot{H}^1(\mathbb{R}^2)}
		\|\theta_0\|_{\dot{H}^2(\mathbb{R}^2)}\\
		&\leq \epsilon^{-2}
		\|\theta_0\|_{L^2 (\mathbb{R}^2)}^{\frac{1}{2}}
		\|\theta_0\|_{\dot{H}^1(\mathbb{R}^2)}
		\|\theta_0\|_{\dot{H}^2(\mathbb{R}^2)}^{\frac{3}{2}}\\
		&\lesssim \epsilon^{-2} \|\theta_0\|_{H^2(\mathbb{R}^2)}^3,
	\end{align*}	
	and applying \eqref{nabla theta}, we infer
	\begin{align*}
		\int_{\mathbb{R}^2} 
		|\nabla^\perp (-\Delta)^{\frac{1}{2}} \theta_\epsilon(T)|
		|\nabla \theta(T)|
		|\Delta \theta_\epsilon(T)| dx
		&\leq \|\nabla^\perp (-\Delta)^{\frac{1}{2}} \theta_\epsilon(T)\|_{L^2(\mathbb{R}^2)}
		\|\nabla \theta(T)\|_{L^\infty (\mathbb{R}^2)}
		\|\Delta \theta_\epsilon(T)\|_{L^2 (\mathbb{R}^2)}\\
		&\leq \epsilon^{-2} 
		\|\theta(T)\|_{\dot{H}^2(\mathbb{R}^2)}
		\|\nabla \theta(T)\|_{L^\infty(\mathbb{R}^2)}
		\|\theta(T)\|_{L^2(\mathbb{R}^2)}\\
		&\lesssim \epsilon^{-2}
		\|\theta(T)\|_{\dot{H}^2(\mathbb{R}^2)}
		\|\theta(T)\|_{\dot{H}^{2+\frac{\alpha}{2}}(\mathbb{R}^2)}^{1-\frac{\alpha}{4}}
		\|\theta(T)\|_{\dot{H}^{\frac{\alpha}{2}}(\mathbb{R}^2)}^{\frac{\alpha}{4}}
		\|\theta(T)\|_{L^2(\mathbb{R}^2)}\\
		&\lesssim \epsilon^{-2}
		\|\theta(T)\|_{\dot{H}^{\frac{\alpha}{2}}(\mathbb{R}^2)}^{\frac{\alpha}{4}}
		\|\theta(T)\|_{L^2(\mathbb{R}^2)}
		\left(
		\|\theta(T)\|_{\dot{H}^2(\mathbb{R}^2)}^{\frac{8-\alpha}{4}}
		+ \|\theta(T)\|_{\dot{H}^{2+\frac{\alpha}{2}}(\mathbb{R}^2)}^{\frac{8-\alpha}{4}}
		\right)\\
		&\lesssim \epsilon^{-2} \|\theta_0\|_{L^2(\mathbb{R}^2)}^{1+\frac{\alpha}{4}}
		{\widetilde{X}_T}^{\frac{8-\alpha}{8}}.
	\end{align*}	
	Thus, we get
	\begin{align*}
		\mathbb{J}^2 \lesssim \frac{M}{N\epsilon^2}
		\left(
		\|\theta_0\|_{H^2(\mathbb{R}^2)}^3
		+ \|\theta_0\|_{L^2 (\mathbb{R}^2)}^{1+\frac{\alpha}{4}}
		{\widetilde{X}_T}^{\frac{8-\alpha}{8}}
		\right).
	\end{align*}	
	Using similar estimate, we obtain that
	\begin{align*}
		\mathbb{J} \lesssim 
		\frac{M}{N\epsilon^2}
		\left( 
		\|\theta_0\|_{L^2(\mathbb{R}^2)}
		\widetilde{X}_T
		+\|\theta_0\|_{L^2(\mathbb{R}^2)}^{1+\frac{\alpha}{2}}
		\widetilde{X}_T^{\frac{6-\alpha}{4}}
		+ \|\theta_0\|_{L^2 (\mathbb{R}^2)}^{1+\frac{\alpha}{4}}
		\widetilde{X}_T^{\frac{8-\alpha}{8}}
		+ \|\theta_0\|_{H^2(\mathbb{R}^2)}^3
		\right).
	\end{align*}	
	Therefore, we arrive at
	\begin{align*}
		&\|\Delta\theta(T)\|_{L^2(\mathbb{R}^2)}^2	+\int_{0}^{T}\|\Delta^{1+\frac{\alpha}{4}}\theta(t)\|_{L^2(\mathbb{R}^2)}^2\\
		&\quad\lesssim  \|\theta_0\|_{\dot{H}^2(\mathbb{R}^2)}^2
		+\epsilon^{\frac{\alpha}{2}} M \|\theta_0\|_{L^2(\mathbb{R}^2)} ^{\frac{\alpha}{4}}
		\widetilde{X}_T^{\frac{12-\alpha}{8}}
		+\frac{M}{N\epsilon^2}
		\left( \|\theta_0\|_{L^2(\mathbb{R}^2)}
		\widetilde{X}_T
		+ \|\theta_0\|_{L^2(\mathbb{R}^2)}^{1+\frac{\alpha}{2}}
		\widetilde{X}_T^{\frac{6-\alpha}{4}}
		+ \|\theta_0\|_{L^2 (\mathbb{R}^2)}^{1+\frac{\alpha}{4}}
		\widetilde{X}_T^{\frac{8-\alpha}{8}}
		+ \|\theta_0\|_{H^2(\mathbb{R}^2)}^3
		\right).
	\end{align*}	
	As a result, it suffices to gain the expression of $\widetilde{X}_T$:
	\small{     \begin{align*}
			\widetilde{X}_T
			&\leq \underset{0\leq t \leq T}{\sup}
			\|\Delta\theta(T)\|_{L^2(\mathbb{R}^2)}^2	+\int_{0}^{T}\|\Delta^{1+\frac{\alpha}{4}}\theta(t)\|_{L^2(\mathbb{R}^2)}^2\\
			&\lesssim  \|\theta_0\|_{\dot{H}^2(\mathbb{R}^2)}^2
			+\epsilon^{\frac{\alpha}{2}} M \|\theta_0\|_{L^2(\mathbb{R}^2)} ^{\frac{\alpha}{4}}
			\widetilde{X}_T^{\frac{12-\alpha}{8}}
			+\frac{M}{N\epsilon^2}
			\left( \|\theta_0\|_{L^2(\mathbb{R}^2)}
			\widetilde{X}_T
			+ \|\theta_0\|_{L^2(\mathbb{R}^2)}^{1+\frac{\alpha}{2}}
			\widetilde{X}_T^{\frac{6-\alpha}{4}}
			+ \|\theta_0\|_{L^2 (\mathbb{R}^2)}^{1+\frac{\alpha}{4}}
			\widetilde{X}_T^{\frac{8-\alpha}{8}}
			+ \|\theta_0\|_{H^2(\mathbb{R}^2)}^3
			\right).
	\end{align*}	}
	Taking 
	$\epsilon=
	\left(
	\|\theta_0\|_{L^2 (\mathbb{R}^2)} \widetilde{X}_T
	+ \|\theta_0\|_{L^2 (\mathbb{R}^2)}^{1+\frac{\alpha}{2}}
	\widetilde{X}_T^{\frac{6-\alpha}{4}}
	+ \|\theta_0\|_{L^2 (\mathbb{R}^2)}^{1+\frac{\alpha}{4}}
	\widetilde{X}_T^{\frac{8-\alpha}{8}}
	+ \|\theta_0\|_{H^2(\mathbb{R}^2)}^3
	\right)^{\frac{2}{4+\alpha}}
	\left(
	N \|\theta_0\|_{L^2 (\mathbb{R}^2)}^{\frac{\alpha}{4}}
	\widetilde{X}_T^{\frac{12-\alpha}{8}}
	\right)^{-\frac{2}{4+\alpha}}$, we gain that there exists a constant $C>1$ such that 
	\small{     \begin{align*}
			\widetilde{X}_T \leq C
			&\left(
			\|\theta_0\|_{\dot{H}^2(\mathbb{R}^2)}^2
			+ \frac{M}{N^{\frac{\alpha}{4+\alpha}}}
			\left(
			\|\theta_0\|_{L^2(\mathbb{R}^2)} \widetilde{X}_T
			+ \|\theta_0\|_{L^2 (\mathbb{R}^2)}^{1+\frac{\alpha}{2}}
			\widetilde{X}_T^{\frac{6-\alpha}{4}}
			+ \|\theta_0\|_{L^2 (\mathbb{R}^2)}^{1+\frac{\alpha}{4}}
			\widetilde{X}_T^{\frac{8-\alpha}{8}}
			+ \|\theta_0\|_{H^2(\mathbb{R}^2)}^3
			\right)^{\frac{\alpha}{4+\alpha}}\cdot\right.\\
			&\left.\quad \cdot\|\theta_0\|_{L^2(\mathbb{R})}^{\frac{\alpha}{4+\alpha}}
			\widetilde{X}_T^{\frac{12-\alpha}{8+2\alpha}}	
			\right).
	\end{align*}	}
\end{proof}
Now, we are at the position to give a proof of Theorem \ref{thm 1.2}.
\begin{proof}[Proof of Theorem \ref{thm 1.2}]
	Applying the same proof method as Theorem $\ref{thm1.1}$, we can easily arrive at the following results:
	for any $T>0$,
	\begin{align*}
		\|\theta\|_{L^\infty_T\dot{H}^2(\mathbb{R}^2)}^2+\|\theta\|_{L^2_T\dot{H}^{2+\frac{\alpha}{2}}(\mathbb{R}^3)}^2
		\leq 2C\|\theta_0\|_{H^2(\mathbb{R}^2)}^2,
	\end{align*}	
	which means the equation \eqref{SQG} admits global smooth solution.
\end{proof}	\vspace{0.2cm}

\textbf{Acknowledgements:} The authors are grateful to Professor Quoc Hung Nguyen, who introduced this project to us and patiently guided, supported, and encouraged us during this work.

\end{document}